\newcommand{\figs}{\usepackage{graphicx} \usepackage{color} \graphicspath{{figs/}} \numberwithin{figure}{section} }
\newcounter{margnotes}
\def\sideremark#1{\ifvmode\leavevmode\fi\vadjust{\vbox to0pt{\vss 
      \hbox to 0pt{\hskip\hsize\hskip1em           
 \vbox{\hsize3cm\tiny\raggedright\pretolerance10000
 \noindent #1\hfill}\hss}\vbox to8pt{\vfil}\vss}}}%
\newcounter{lemenumi}
\newcommand{\labelemenumi}{(\alph{lemenumi})}
\newtheorem{theorem}{Theorem}[section]
\newtheorem{proposition}[theorem]{Proposition}
\newtheorem{corollary}[theorem]{Corollary}
\theoremstyle{definition}
\newtheorem{definition}[theorem]{Definition}
\theoremstyle{remark}
\newtheorem{remark}[theorem]{Remark}
\numberwithin{equation}{section}
\newcommand{\bbC}{\mathbb{C}}
\newcommand{\bbR}{\mathbb{R}}
\newcommand{\bbQ}{\mathbb{Q}}
\newcommand{\dr}{\mathrm{d}}
\newcommand{\ep}{\varepsilon}
\newcommand{\dul}{\mathcal{D}}
\title[Finite cyclicity of  slow-fast Darboux systems]{Finite cyclicity of quadratic slow-fast Darboux systems with a  two-saddle loop}
\author{Marcin Bobie\'nski}
\address{Institute of Mathematics, Warsaw University,
ul. Banacha 2, 02-097 Warsaw, Poland}
\email{mbobi@mimuw.edu.pl}
\thanks{Supported by Polish NCN Grant No 2011/03/B/ST1/00330}
\author{Lubomir Gavrilov}
\address{Institut de Math\'{e}matiques de Toulouse, UMR 5219\\
 Universit\'{e} de Toulouse, CNRS\\
 UPS IMT, F-31062 Toulouse Cedex 9,  France}
\subjclass[2000]{34C08, 34M03, 34M35}
\keywords{slow-fast system, limit cycle, double heteroclinic loop}
\date{\today}
\begin{document}
\begin{abstract}
We prove that the cyclicity of a  quadratic slow-fast integrable system  of Darboux type with a double heteroclinic loop, is finite and uniformly bounded.
\end{abstract}
\maketitle
\section{Introduction}
\label{sec:intro}

Let ${\mathcal F}_\varepsilon$,  be an analytic family of analytic real plane foliations (or vector fields), depending on a small parameter $\varepsilon$, and having for all $\varepsilon>0$ a bounded period annulus (a nest of periodic  orbits) $\Pi_\varepsilon$. Consider a further multi parameter analytic deformation ${\mathcal F}_{\varepsilon,\delta}$ of ${\mathcal F}_\varepsilon$, and denote by $Z(\varepsilon)=Cycl({\mathcal F}_{\varepsilon,\delta},\bar{\Pi}_{\varepsilon})$  the maximal number of limit cycles which bifurcate from the closure $\bar{\Pi}_{\varepsilon}$ for sufficiently small $\|\delta \|$. The number $Z(\varepsilon)$ is therefore the cyclicity of the closed   period annulus $\bar{\Pi}_\varepsilon$ with respect to the deformed foliation ${\mathcal F}_{\varepsilon,\delta}$ \cite{rous98}. The purpose of the paper is to prove the $\varepsilon$-uniform boundedness of $Z(\varepsilon)$ in the case, when ${\mathcal F}_\varepsilon$, $\varepsilon>0$, is a Darboux integrable plane foliation, and ${\mathcal F}_0$ has a curve of singular points (slow manifold), as on fig. \ref{fig_slowfast}.
The family ${\mathcal F}_\varepsilon$ will be referred to as a \emph{slow-fast integrable system of Darboux type}.

\begin{figure}[htpb]
\def\svgwidth{8cm}
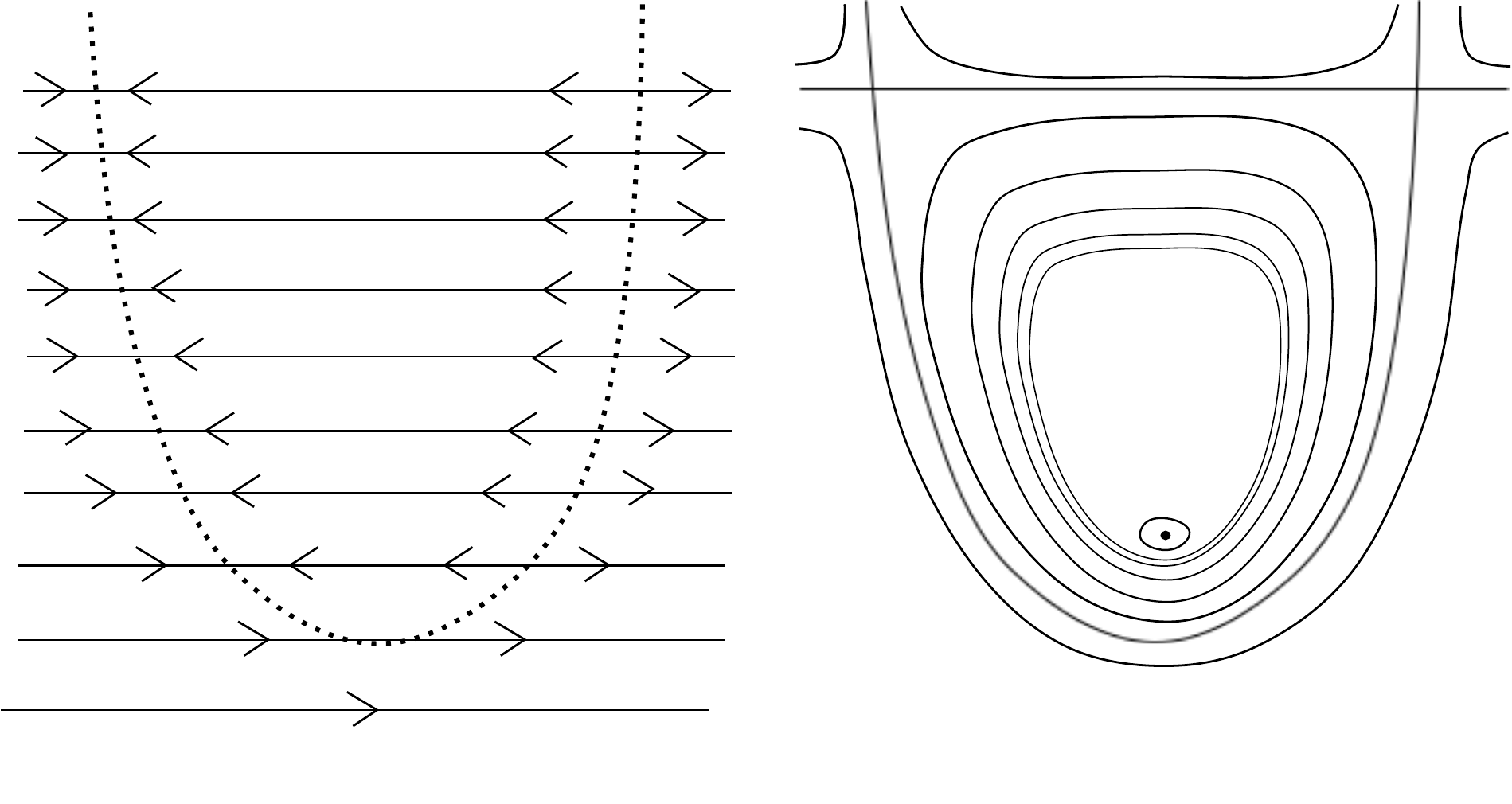
\caption{The slow-fast    Darboux foliation ${\mathcal F}_{\varepsilon}$.}
\label{fig_slowfast}
\end{figure}

Another motivation for our result comes from the program of proving uniform finiteness of the number  of limit cycles of plane quadratic vector fields \cite{drr94,rous98} (the existential Hilbert's 16th problem). Indeed, a family of plane quadratic systems can be slow-fast, with a degenerate graphics as on   fig. \ref{fig_slowfast}. 

The main difficulty in the study of ${\mathcal F}_{\varepsilon,\delta}$ is on  the one hand the "turning point" of ${\mathcal F}_{0}$, and on the other hand the double heteroclinic loop of ${\mathcal F}_{\varepsilon}$ ($\varepsilon > 0$). The zeros of pseudo-Abelian integrals appearing in the usual Poinacr\'e-Melnikov method does not detect  the so called alien limit cycles \cite{duro06}. Nevertheless, in  recent papers \cite{bmn09, bmn13}, it  has been shown that the number of zeros of pseudo Abelian integrals, which arise as a first approximation of the first return map of our slow-fast system, have the desired uniform finiteness property. 

In this paper  we  replace   the pseudo-Abelian integrals studied in \cite{bmn09, bmn13} by the true Dulac maps defining the limit cycles as fixed points.
Our main result is the uniform finiteness of the number of limit cycles  of the slow-fast Darboux system under consideration. The analytic deformations
${\mathcal F}_{\varepsilon,\delta}$ which we consider are arbitrary and can depend on an arbitrary given number of parameters.

Our method makes a strong use of the properties of the foliation ${\mathcal F}_{\varepsilon,\delta}$ in a complex domain, where we apply a technique derived from the so called  "Petrov trick", along the lines of \cite{gavr11,gavr11a}. 

\section{Statement of the result}

Using the notations of \cite{bmn13}, let $P_0=P_0(x,y),  P_1=P_1(x,y) $ be real bivariate polynomials
and consider the differential system
\begin{equation}
\label{intfol}
P_1' = \varepsilon P_1 , \quad
P_0' = - P_0, \;\; \varepsilon \in \bbR^+ 
\end{equation}
which induces the foliation
\begin{equation}
\label{foliation}
{\mathcal F}_{\varepsilon}:  \varepsilon P_1 dP_0 + P_0 dP_1  = 0.
\end{equation}
It has a Darboux type first integral $H= P_0^\varepsilon P_1$, and  for $\varepsilon = 0$ a curve of singular points $\{(x,y): P_0(x,y)=0\}$. 
For $\varepsilon$ close to zero the curve is the slow manifold of the slow-fast system (\ref{intfol}). In the present paper we are interested in the simplest possible slow-fast Darboux system, shown on fig.\ref{fig_slowfast}. 
More precisely, assume that
\[
P_0=y-x^2,\ P_1=1-y. 
\]
Consider further the following perturbed slow-fast Darboux integrable foliation
\begin{equation}
{\mathcal F}_{\varepsilon,\delta}:  \varepsilon P_1 dP_0 + P_0 dP_1 +   P\dr x+Q \dr y = 0.
\end{equation}
where $P=P(x,y,\delta)$, $Q= Q(x,y,\delta)$ are real polynomials in $x,y$ depending analytically on $\delta\in (\bbR^N,0)$, and such that 
$$P(x,y,0)=Q(x,y,0)\equiv 0.$$
 For every fixed sufficiently small $\varepsilon>0$, denote by $Z(\varepsilon)$ the maximal number of limit cycles of ${\mathcal F}_{\varepsilon,\delta}$, which bifurcate from  the compact region $\bar{\Pi}$ bound by the curves $\{P_0=0\}$, $\{P_1=0\}$,
 for sufficiently small $\|\delta\|$. The number $Z(\varepsilon)$ is therefore the cyclicity of the closed period annulus  $\bar{\Pi}$  of ${\mathcal F}_{\varepsilon,0}$ under the deformation ${\mathcal F}_{\varepsilon,\delta}$.
The main result of the paper is the following theorem
\begin{theorem}
\label{th:main}
The cyclicity $Z(\varepsilon)$ is finite and uniformly bounded in $\varepsilon>0$.
\end{theorem}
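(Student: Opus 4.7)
The strategy is to replace the pseudo-Abelian integrals of \cite{bmn09, bmn13} by the full return map of ${\mathcal F}_{\varepsilon,\delta}$ and to count zeros of its displacement function by a complex-analytic argument in the spirit of \cite{gavr11, gavr11a}. Fix a transversal $\sigma$ to ${\mathcal F}_{\varepsilon,0}$ inside $\bar{\Pi}$, parametrized by the values of the first integral $H = P_0^\varepsilon P_1$, define the holonomy $\mathcal R_{\varepsilon,\delta}:\sigma \to \sigma$, and let $\Delta(h,\varepsilon,\delta) = \mathcal R_{\varepsilon,\delta}(h) - h$. Since $P(\cdot,0) \equiv Q(\cdot,0) \equiv 0$, one has $\Delta(\cdot,\varepsilon,0) \equiv 0$, and limit cycles bifurcating from $\bar{\Pi}$ correspond exactly to the real zeros of $\Delta(\cdot,\varepsilon,\delta)$ for small $\delta$. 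A Roussarie-type compactness reduction then allows one to replace the uniform cyclicity bound by a uniform bound on the number of real zeros of a finite family of analytic functions of $h$, depending on $\varepsilon$ but not on $\delta$, extracted from the Bautin-type expansion of $\Delta$ in powers of $\delta$.

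The next step is a factorization of $\mathcal R_{\varepsilon,\delta}$ into Dulac maps. The boundary of $\bar{\Pi}$ is a double heteroclinic loop with two hyperbolic saddles at $(\pm 1, 1)$; near each saddle the Darboux integrable structure yields analytic normal coordinates in which the Dulac map admits an explicit form controlled by an $\varepsilon$-dependent hyperbolicity ratio. Writing schematically $\mathcal R_{\varepsilon,\delta} = T_{+}\circ D_{+}\circ T_{0}\circ D_{-}\circ T_{-}$, with $D_{\pm}$ the two saddle Dulac maps and $T_{\bullet}$ regular analytic transitions along compact hyperbolic arcs, each factor admits a controlled analytic continuation to a complex neighbourhood of the real parameter interval, with ramification of $D_\pm$ prescribed by the first integral $H$.

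I would then execute the Petrov-type estimate. After a normalization that inverts the ramified part of the Dulac factors, $\Delta$ continues to a single-valued holomorphic function on a complex domain $U_\varepsilon \subset \bbC$, and its real zeros inside $U_\varepsilon \cap \bbR$ are bounded by the argument principle in terms of the variation of argument of this normalization along $\partial U_\varepsilon$. The Petrov trick reduces this variation to a sign-change count for the imaginary part of a suitable logarithm of the normalized displacement function along $\partial U_\varepsilon$, which can be bounded in terms of the polynomial complexity (degrees and number of parameters) of $P_0$, $P_1$, $P$ and $Q$ alone.

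The main obstacle is the $\varepsilon$-uniformity of the whole argument. As $\varepsilon \to 0$, the saddle hyperbolicity ratios of $D_\pm$ degenerate, the ramification loci of $D_\pm$ in the complex plane coalesce with one another and with the turning point of the slow manifold $\{P_0 = 0\}$, and the natural normalization tends to become singular. Constructing a family of complex domains $U_\varepsilon$ whose shape stays at the correct $\varepsilon$-dependent distance from both saddles and from the turning point, and proving that the Petrov-type argument-variation estimate on $\partial U_\varepsilon$ remains bounded independently of $\varepsilon$, is where the bulk of the work lies; this is precisely the adaptation of the techniques of \cite{gavr11, gavr11a} to the slow-fast setting. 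Once this uniform bound is in place, Theorem \ref{th:main} follows from the standard argument-principle counting.
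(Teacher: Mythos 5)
Your overall plan --- complexify the Dulac maps, bound the zeros of the displacement by the argument principle on a parameter-dependent complex domain, and convert the boundary estimate into a sign-change count via the Petrov trick --- is indeed the method of the paper. But two points separate your proposal from an actual proof, and the second is the heart of the matter.

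First, the reduction you propose in $\delta$ (Bautin-type expansion, extraction of a finite family of $\delta$-independent analytic functions) is not what the paper does, and it heads toward exactly the difficulty the paper is designed to avoid: the leading terms of such an expansion are the pseudo-Abelian integrals of \cite{bmn09,bmn13}, which do not detect alien limit cycles. The paper instead works with the \emph{true} Dulac maps of ${\mathcal F}_{\varepsilon,\delta}$ for each small $(\varepsilon,\delta)$: it takes the cross-section $\{x=0\}$, defines two half-return Dulac maps $\dul_1=\dul_2$ with \emph{non-equivalent geometric realizations} by families of paths in the complex leaves, shows these realizations persist under perturbation, and bounds the zeros of $\dul_1-\dul_2$ directly, uniformly in $\delta$. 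No division in an ideal is needed; what is needed is the persistence and analyticity in $(\varepsilon,\delta)$ of the geometric realizations.

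Second, you explicitly defer "the main obstacle", the $\varepsilon$-uniformity near the turning point, to "the bulk of the work". That is precisely the content of the proof, and your proposal contains no idea for it. The paper resolves it by three concrete devices: (i) an explicit description of the domain via the isoclines $C_\theta$ of $y^\varepsilon(1-y)$, whose extreme members $C_{\pm\pi}^{1,2}=\{\mathrm{Im}\,\dul_{1,2}=0\}$ form the boundary of the domain $\mathbf D$, so that the Petrov zeros on the boundary become intersection points $C^1_{\pm\pi}\cap C^2_{\pm\pi}$; (ii) the weighted blow-up $x\to\sqrt{\varepsilon}\,x$, $y\to\varepsilon y$, which sends the first integral to $e^{-y}(y-x^2)+O(\varepsilon)$ and renders the Dulac maps analytic in $\sqrt{\varepsilon}$ and $\delta$ at $\varepsilon=0$, giving the uniform bound on the argument variation along the segment $\{\mathrm{Re}\,y=y_c\}$; and (iii) the identification of the intersection points $C^1_{\pm\pi}\cap C^2_{\pm\pi}$ with fixed points of the holonomy of a figure-eight loop, which can be homotoped to finite ($\varepsilon$-independent) distance from the slow parabola, so that this holonomy is analytic in the parameters and its fixed points are uniformly counted. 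Without these steps (and without the separate treatment of $\varepsilon\to\infty$ by exchanging $P_0$ and $P_1$, and of compact $\varepsilon$-intervals via \cite{gavr11a}), the argument-principle framework you set up does not yield a bound independent of $\varepsilon$.
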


\begin{remark}
Authors expect that the theorem \ref{th:main} remins true in the more general case, under the following assumptions on the unperturbed Darboux integrable foliation \ref{foliation}. Assume that the real curves $\{P_0=0\}$, $\{P_1=0\}$ are smooth, intersect transversally and bound a compact region $\overline{\Pi}$  in which the foliation (\ref{foliation}) has no singular points. Assume also that $\{P_0=0\}$ is transverse to the foliation $dP_1=0$ except at one point of simple tangency. Additional tools are needed to investigate the isoclines structures in this more general case.
\end{remark}

\section{The integrable foliation ${\mathcal F}_{\varepsilon,0}$ where $\varepsilon >0$}
\label{sec:integrable}
\begin{figure}[htpb]
\def\svgwidth{8cm}
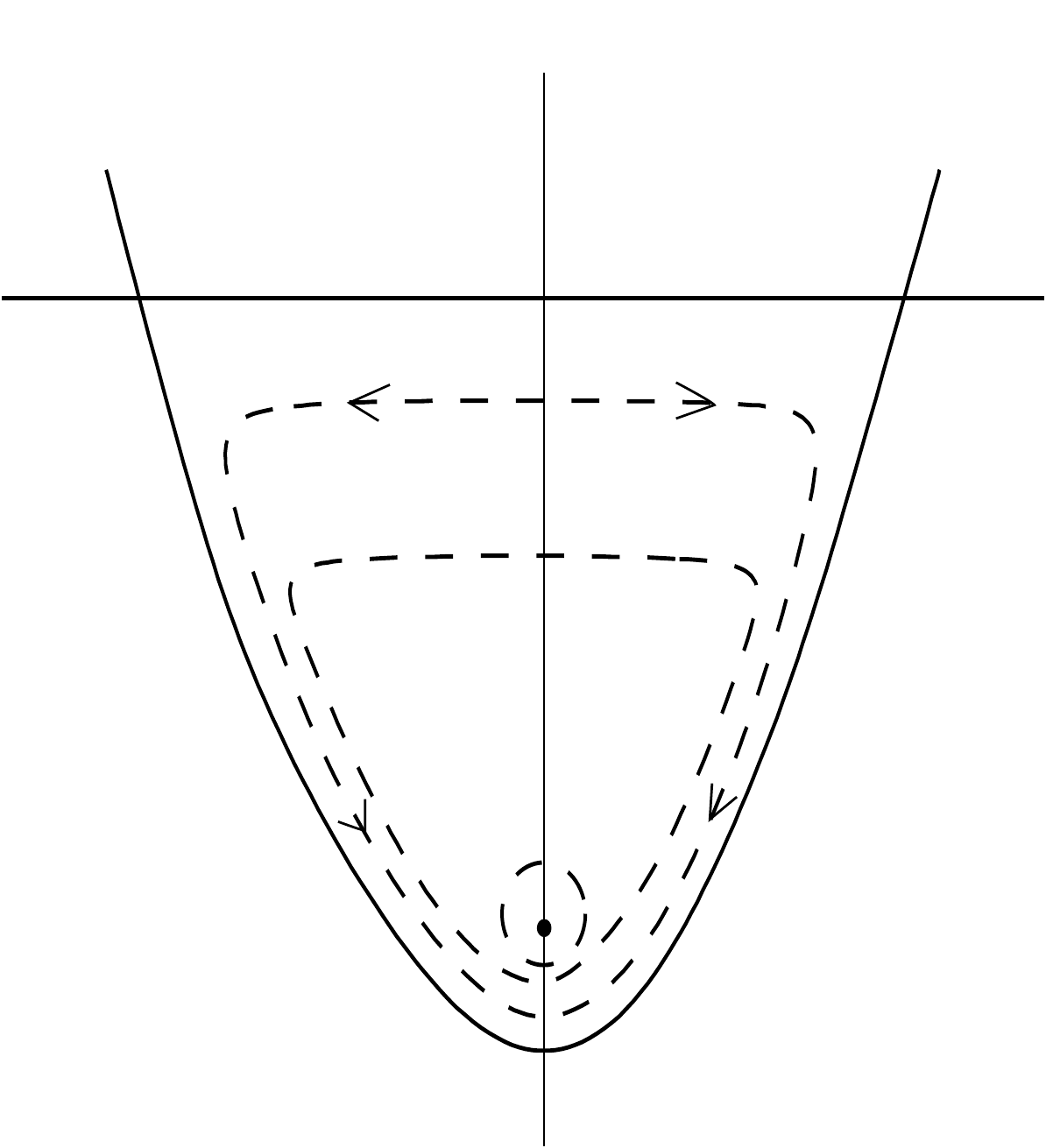
\caption{The real integrable plane foliation ${\mathcal F}_{\varepsilon,0}$ for $\varepsilon > 0$.}
\label{fig_fig21}
\end{figure}
In this section we assume that $\delta=0$ and $\varepsilon >0$ is a sufficiently small fixed real parameter. The foliation 
${\mathcal F}_{\varepsilon,0}$ has a Darboux type first integral $H=(y-x^2)^\varepsilon (1-y)$ and its phase portrait is shown on 
fig.\ref{fig_fig21}.  It has a real nest of cycles bounded by the parabola $y-x^2=0$ and a line $y-1=0$. The center point is located at 
$$p_c=(0,y_c), \; y_c =\tfrac{\ep}{\ep + 1}.
$$
\subsection{The complex leaves of ${\mathcal F}_{\varepsilon,0}$ }
\begin{figure}[htpb]
\input{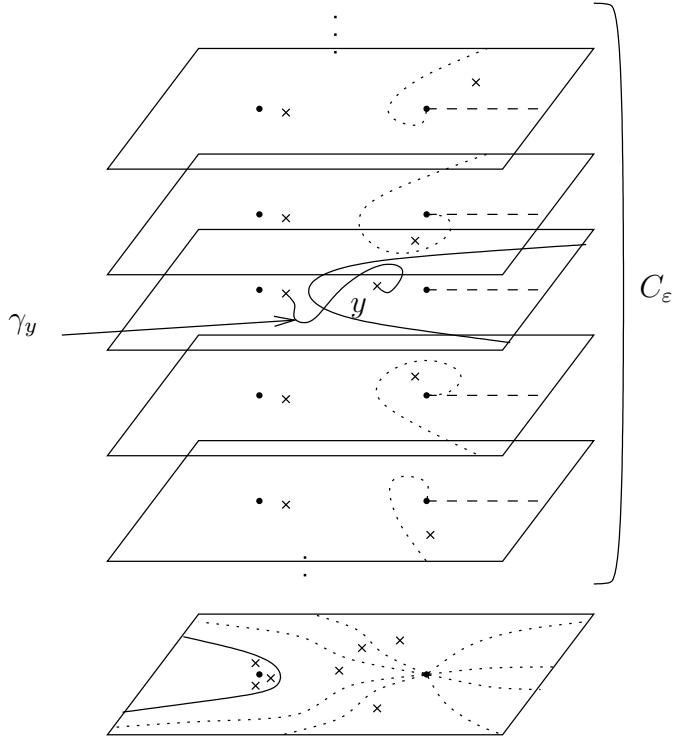}
\caption{Every complex leaf  of the foliation ${\mathcal F}_{\varepsilon,0}$, which is not the line $\{y=1\}$,  is a double covering of the Riemann surface $C_\ep$.} 
\label{fig_contour}
\end{figure}
The leaves of dimension one of ${\mathcal F}_{\varepsilon,0}$ are connected open Riemann surfaces, on which the function 
$$(y-x^2)^\varepsilon (1-y)$$
is constant.  Every leaf intersects the cross-section $\{x=0\}$ at at least one point $(0,y_0)$.
The Riemann surface $C_{\ep}$ of the multi-valued analytic function 
\begin{equation}
\label{epfunct}
f(y):= y (1-y)^{1/\ep},  \; y\neq 1 
\end{equation}
is conformally equivalent either to $\mathbb C$ ($\varepsilon \not\in \bbQ$), or to $\mathbb C ^*$ ($\varepsilon \in \bbQ$) and on the leaf through $(0,y_0)$ holds
 \begin{equation}
\label{covering}
x^2= (f(y)-f(y_0))(1-y)^{-1/\ep}.
\end{equation}
This implies the following
\begin{proposition}
\label{pr22}
Every complex leaf of the integrable foliation ${\mathcal F}_{\varepsilon,0}$ which does not contain the center point $(0,y_c)$, or is not the line $\{y=1\}$, is a double covering of $C_\ep$ given by the formula (\ref{covering}).
The $y$-coordinates of the ramification points of the covering are the solution of the equation $f(y)=f(y_0)$. 
The leaf of ${\mathcal F}_{\varepsilon,0}$ through the center point $p_c=(0,y_c)$ has a single singular point at $p_c$ which is a normal crossing, and otherwise is a double   covering of $C_\ep$ given by the formula (\ref{covering}), with ramification points $(0,y)$ satisfying $f(y)=f(y_c)$. 
\end{proposition}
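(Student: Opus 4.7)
The plan is to derive formula (\ref{covering}) directly from the equation $H = y_0^\varepsilon(1-y_0)$ of a leaf, and then read off the branched-cover structure from it. Fix a point $(0,y_0)$ on the leaf. Raising the integral equation to the power $1/\varepsilon$ (a single-valued operation once one works on the Riemann surface $C_\varepsilon$ of $(1-y)^{1/\varepsilon}$) gives
\[
(y-x^2)(1-y)^{1/\varepsilon} = y_0 (1-y_0)^{1/\varepsilon} = f(y_0),
\]
and solving for $x^2$ yields (\ref{covering}). The right-hand side is a well-defined holomorphic function $g(y)$ on $C_\varepsilon$, so the projection $(x,y)\mapsto y$ exhibits the leaf as a double cover of $C_\varepsilon$, branched precisely over the zero set of $g$. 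Since $(1-y)^{-1/\varepsilon}$ has no zero on $C_\varepsilon$, that zero set is exactly $\{f(y)=f(y_0)\}$, which settles the generic statement.

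For the leaf through the center $p_c=(0,y_c)$, set $y_0=y_c$. A direct computation gives
\[
f'(y) = (1-y)^{1/\varepsilon - 1}\bigl(1 - (1+1/\varepsilon)y\bigr),
\]
so $f'(y_c)=0$ for $y_c = \varepsilon/(\varepsilon+1)$, and $f(y)-f(y_c)$ vanishes to order exactly two at $y_c$. Hence $g(y) = c(y-y_c)^2\bigl(1 + O(y-y_c)\bigr)$ with $c\neq 0$, and the equation $x^2 = g(y)$ factors locally at $p_c$, by the Weierstrass preparation theorem, as a product of two transverse smooth analytic branches
\[
\bigl(x - \sqrt{c}\,(y-y_c) + \cdots\bigr)\bigl(x + \sqrt{c}\,(y-y_c) + \cdots\bigr) = 0,
\]
that is, a normal crossing at $p_c$. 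Away from $p_c$ the leaf is again a double cover of $C_\varepsilon$, ramified over the remaining preimages of $f(y_c)$, by the same argument as in the generic case.

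The most delicate point is to check that $x^2 = g(y)$ defines a \emph{single connected} Riemann surface—the whole leaf—rather than two disjoint sheets over $C_\varepsilon$. This would fail only if $g$ were a global square on $C_\varepsilon$; but $g$ has simple zeros on a generic leaf, and on the central leaf a double zero at $y_c$ together with (generically) further simple zeros, so its divisor class on $C_\varepsilon$ is non-trivial and the double cover is connected. I expect this connectedness check to be the main technical point; the remainder is an explicit manipulation of the Darboux first integral.
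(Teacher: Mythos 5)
Your proof is correct and follows essentially the same route as the paper, which simply derives formula (\ref{covering}) from the first integral $(y-x^2)^\varepsilon(1-y)=y_0^\varepsilon(1-y_0)$ and states the proposition as an immediate consequence. Your additional verifications --- the computation $f'(y)=(1-y)^{1/\varepsilon-1}\bigl(1-(1+1/\varepsilon)y\bigr)$ showing a double zero of $f-f(y_c)$ at $y_c$ (hence the normal crossing), and the connectedness of the double cover via the existence of an odd-order branch point --- are exactly the routine details the paper leaves implicit, and they check out.
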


\subsection{Analytic continuation of the Dulac maps}
 Consider the cross-section $\{x=0\}$ parameterized by $y$, as well the two  Dulac maps $\dul_1=\dul_2$ 
\begin{equation}
\label{realdulac}
\dul_{1,2} : ( y_c,1) \rightarrow (0, y_c) 
\end{equation}
shown on figure \ref{fig_fig21}. 
One obvious  extension of $\dul_{1,2} $  is
\begin{equation}
\label{realdulac1}
\dul_{1,2} : ( 0,1) \rightarrow (0,  1) 
\end{equation}
which is a real-analytic involution
$$\dul_{1,2}(\bar{y})= \overline{\dul_{1,2}(y)}, \quad\dul_{1,2}^2=id, \quad \dul_{1,2}(p_c)=p_c .$$
For the needs of the present paper we do not need a global description of the Dulac maps, but only  an appropriate domain of analiticity in which we shall apply later the argument principle. This domain is described as follows.

 Let $ C_{\pm \pi}$, $C_{\pm 0}$ be the   real curves in the complex $y$-plane, defined in polar coordinates as follows
 $$
 C_{+0}= \{\rho = \frac{\sin(\varepsilon \varphi)}{\sin(\varphi + \varepsilon \varphi)} : - \frac{\pi}{1+\varepsilon} < \varphi <  \frac{\pi}{1+\varepsilon}  \}$$
 $$
 C_{-0}= \{\rho = \frac{\sin(\varepsilon \varphi)}{\sin(\varphi + \varepsilon \varphi)} : 0 < \varphi < \frac{\pi}{1+\varepsilon} \}$$
 $$
 C_{-\pi}= \{\rho = \frac{\sin(\varepsilon \varphi+ \varepsilon \pi)}{\sin(\varphi + \varepsilon \varphi+ \varepsilon \pi)} : 0 < \varphi < \frac{\pi(1-\varepsilon)}{1+\varepsilon} \} 
$$
$$
 C_{\pi}=
 \{\rho = \frac{\sin(\varepsilon \varphi- \varepsilon \pi)}{\sin(\varphi + \varepsilon \varphi- \varepsilon \pi)} : -\frac{\pi(1-\varepsilon)}{1+\varepsilon}< \varphi < 0 \}  .
 $$
 
Let ${\bf D}_1$ be the open complex domain delimited by the curves $C_{\pm 0}$ and $C_{\pm \pi}$,  and ${\bf D}_0$ be the complex domain delimited by the curves $C_{\pm 0}$ and the segment $(-\infty, 0)$, see fig. \ref{fig_isoclines}.
\begin{proposition}
\label{pr21}
The real Dulac maps (\ref{realdulac1})  allow an extension to   bi-holomorphic maps
$$
\dul_{1,2}: 
{\bf D}_0\cup {\bf D}_1 \cup C_{+0} \cup C_{-0}\cup \{y_c\}
\rightarrow {\bf D}_0\cup {\bf D}_1 \cup C_{+0} \cup C_{-0}\cup \{y_c\}
$$
where
$$
\dul_{1,2}({\bf D}_1)= {\bf D}_0,\; \dul_{1,2}( C_{+0}) = C_{-0} , \dul_{1,2}(y_c)=y_c .
$$
They can be further analytically continued to a suitable open neighborhood of 
$C_{\pi}$, $C_{-\pi}$
and 
$$
\dul_{1,2}(C_{-\pi}) = \dul_{1,2}(C_{\pi}) = (-\infty,0).
$$
The limit of $\dul_{1,2}$ at $y=1$ exists and $\dul_{1,2}(1)=0$.
\end{proposition}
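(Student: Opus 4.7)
The plan is to realize the Dulac map implicitly through the first integral. On the cross-section $\{x=0\}$, two points on the same leaf satisfy $f(y)=f(y_0)$, where $f(y)=y(1-y)^{1/\varepsilon}$ is the function from (\ref{epfunct}). Fixing the principal branch of $(1-y)^{1/\varepsilon}$ with $\arg(1-y)\in(-\pi,\pi)$ makes $f$ holomorphic on $\bbC\setminus[1,\infty)$; it is real-positive on $(0,1)$ with a unique critical point at $y=y_c$, so that $f|_{(0,y_c)}$ and $f|_{(y_c,1)}$ are real-analytic bijections onto $(0,f(y_c))$ whose composition is precisely the real Dulac map (\ref{realdulac1}).

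Next I would identify $C_{\pm 0}$ and $C_{\pm\pi}$ as components of the level sets $\{\arg f=0\}$ and $\{\arg f=\pm\pi\}$ on suitable branches. Writing $y=\rho e^{i\varphi}$ and applying the law of sines to the triangle with vertices $0,1,y$ gives $\rho=\sin\beta/\sin(\varphi+\beta)$, where $\beta=-\arg(1-y)$ is the interior angle at $1$. Imposing $\arg f=\varphi-\beta/\varepsilon=0$, i.e.\ $\beta=\varepsilon\varphi$, reproduces the polar equation for $C_{\pm 0}$; the analogous computation with $\arg f=\pm\pi$, after continuing $(1-y)^{1/\varepsilon}$ once around $y=1$, reproduces $C_{\pm\pi}$. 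In particular $f$ is real on $C_{\pm 0}\cup C_{\pm\pi}\cup(-\infty,0)$, so $f$ maps the boundary of each of $\mathbf{D}_0$ and $\mathbf{D}_1$ into $\bbR$.

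I would then use the argument principle to show that $f$ is univalent on each of $\mathbf{D}_0,\mathbf{D}_1$ onto a common image $\Omega\subset\bbC\setminus\bbR$. Granting this, set
$$
\dul_{1,2}|_{\mathbf{D}_1}:=\bigl(f|_{\mathbf{D}_0}\bigr)^{-1}\circ f|_{\mathbf{D}_1},
$$
a biholomorphism $\mathbf{D}_1\to\mathbf{D}_0$ that agrees with the real Dulac map on $(y_c,1)$ by uniqueness of analytic continuation. The equalities $\dul_{1,2}(y_c)=y_c$ and $\dul_{1,2}(C_{+0})=C_{-0}$ follow by boundary matching together with the fact that $y_c$ is the unique critical point of $f$ in $\overline{\Omega}$. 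To extend across $C_{\pm\pi}$ one switches to the next sheet of $(1-y)^{1/\varepsilon}$, on which the curves $C_{\pm\pi}$ join the locus $\{\arg f=0\}$; since the target $\{\arg f=\pm\pi\}$ in $\mathbf{D}_0$ is precisely the ray $(-\infty,0)$, one obtains $\dul_{1,2}(C_{\pm\pi})=(-\infty,0)$. The limit $\dul_{1,2}(1)=0$ is forced by continuity and $f(1)=f(0)=0$.

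The hard part will be the univalence of $f$ on $\mathbf{D}_0$ and $\mathbf{D}_1$ and the correct identification of the common image $\Omega$. This reduces to a monotonicity/argument-principle analysis of $f$ along $C_{\pm 0}$, $C_{\pm\pi}$ and $(-\infty,0]$, verifying that the image curves $f(\partial\mathbf{D}_j)$ wind exactly once around each point of $\Omega$. The secondary technical point is the bookkeeping of the monodromy of $(1-y)^{1/\varepsilon}$ at $y=1$ when matching sheets to extend $\dul_{1,2}$ across $C_{\pm\pi}$; once that is handled, Schwarz reflection across $C_{\pm 0}$ and $C_{\pm\pi}$ gives the full holomorphic map claimed in the proposition.
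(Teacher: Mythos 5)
Your proposal follows the same basic strategy as the paper's proof: the extension of $\dul_{1,2}$ is defined implicitly by the level sets of the first integral on the cross-section, and the curves $C_{\pm 0}$, $C_{\pm\pi}$ are identified as the isoclines $\{\arg f=\theta\}$ via exactly the law-of-sines computation you describe (the paper works with $H(0,y)=y^\varepsilon(1-y)$ rather than $f=H^{1/\varepsilon}$, which changes nothing). Where you diverge is in how the biholomorphism is produced. You propose a two-dimensional argument: prove $f$ univalent on each of ${\bf D}_0$, ${\bf D}_1$ by the argument principle and set $\dul_{1,2}=(f|_{{\bf D}_0})^{-1}\circ f|_{{\bf D}_1}$, rightly flagging the univalence as the hard part. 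The paper instead argues fiberwise: the isoclines $C_\theta$, $-\pi<\theta<\pi$, foliate ${\bf D}_0\cup{\bf D}_1$, each having one component in each domain; the real Dulac map exchanges the two components of $C_0$, hence by continuity in $\theta$ the continuation exchanges the components of every $C_\theta$; and the modulus identity $|f(y)|=|f(\dul(y))|$ together with $\dul'\neq 0$ and the behaviour at the ends of each isocline makes $\dul$ a proper real-analytic bijection on each fiber, hence a biholomorphism of the union. This one-dimensional continuation is in effect a proof of the univalence you defer (the pair $(\arg f,|f|)$ serve as global coordinates on each domain), so it buys the hard step essentially for free. Two small inaccuracies in your write-up, neither fatal: the common image is not contained in $\bbC\setminus\bbR$, since $f$ maps both $(0,y_c)$ and $(y_c,1)$ onto the real interval $(0,f(y_c))$ --- the image is a doubly slit plane; and no monodromy of $(1-y)^{1/\varepsilon}$ around $y=1$ is needed to reach $C_{\pm\pi}$, which lie on the principal branch, where the total argument $\arg y+\varepsilon^{-1}\arg(1-y)$ simply attains the values $\pm\pi$.
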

{\bf Proof.} The function $H(0,y)=y^\varepsilon (1-y)$ allows an analytic continuation in $\bbC \setminus (-\infty,0]$.
The real Dulac map satisfies $H(0,y)=H(0,\dul_{1,2}(y))$ so does its complex extension, when it exists. Consider the 
isoclines 
$$
C_{\theta}=\{ y\in {\bf D}_0 \cup {\bf D}_1 : \arg ( y^\varepsilon (1-y) )= \varepsilon \theta \} $$
or equivalently
$$
C_{\theta} = \{ y :  \varepsilon \arg(y) + \arg (1-y) = \varepsilon \theta \}.
$$
which implies in  polar coordinates 
$$
C_\theta= \{ (\rho, \phi): \rho = \frac{\sin(\varepsilon \varphi - \theta)}{\sin(\varphi+\varepsilon \varphi - \theta)} \} .
$$
Thus ${\bf D}_1\cup{\bf D}_0$ is an union of the  isoclines $C_\theta$ 
$${\bf D}_1\cup{\bf D}_0 = \{  C_\theta : - \pi < \theta <   \pi \}
$$
where each  $C_\theta$ has exactly two connected components, contained respectively in ${\bf D}_1$ or ${\bf D}_0$,
see fig. \ref{fig_isoclines}.
\begin{figure}[htpb]
 \def\svgwidth{10cm}
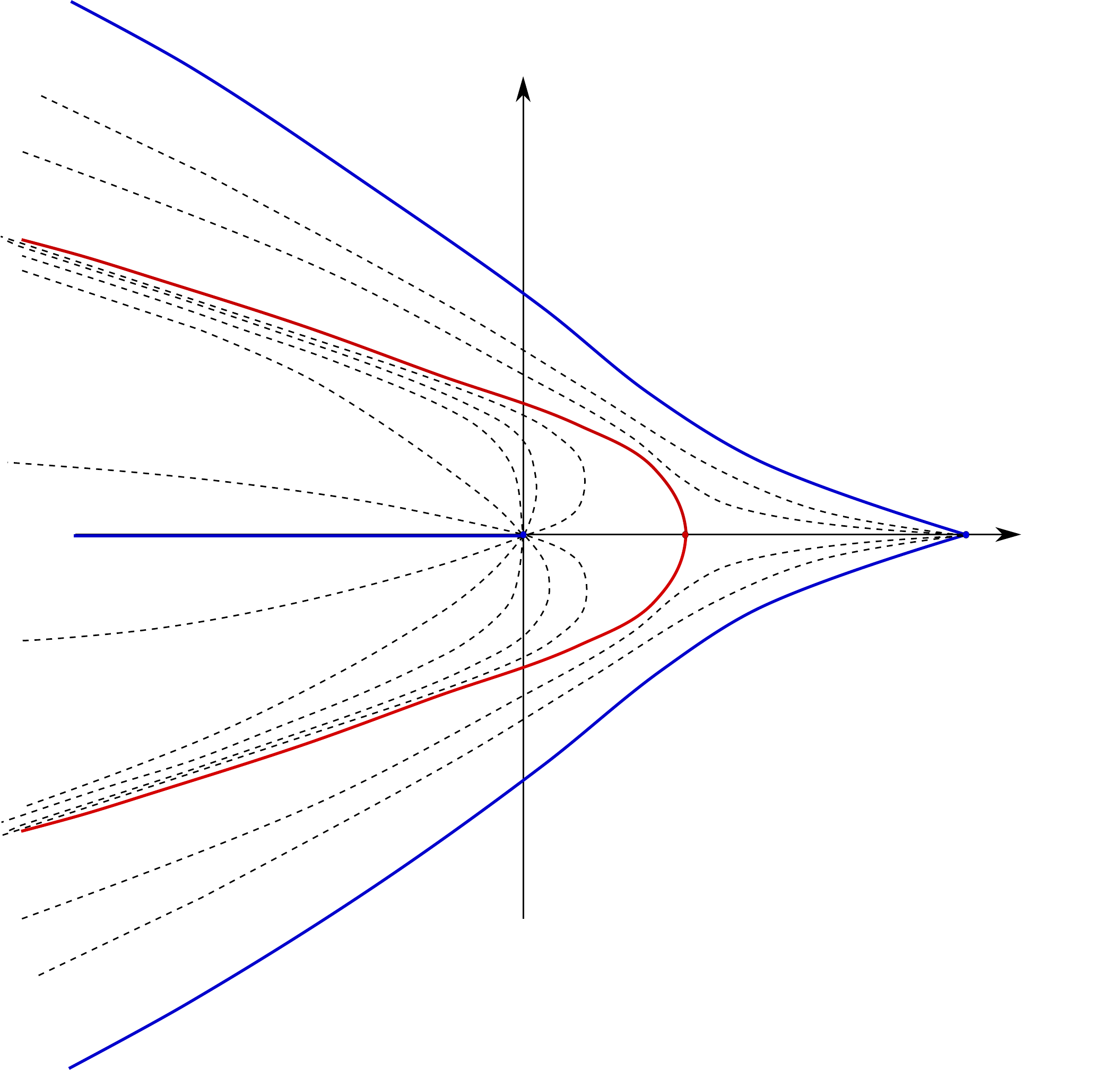
\caption{The isoclines of the function $y^\varepsilon (1-y)$}
\label{fig_isoclines}
\end{figure}
 For $\theta=0$ the Dulac map exchanges the segments $(0,y_c)$ and $(y_c,1)$, by continuity the same holds true for all $\theta$. Moreover, by a local analysis of the Dulac map near $1$ we have 
$$
\lim_{y\rightarrow 1} \dul_{1,2}(y)= 0
$$
and the identity $| y^\varepsilon (1-y) | = |\dul_{1,2}( y)^\varepsilon (1-\dul_{1,2}(y)) |$ implies that when $| y|$ tends to infinity along $C_\theta$, then so does $\dul_{1,2}(y)$. As $\dul_{1,2}'(y)\neq 0$ then $\dul_{1,2}$ is a bi-analytic map between connected components of $C_\theta$, a bijection between ${\bf D}_1 $ and ${\bf D}_0$, and finally a bi-holomorphic map between ${\bf D}_1 $ and ${\bf D}_0$. The claims about the behavior of $\dul_{1,2}$ along the border of the domains  ${\bf D}_1$, ${\bf D}_0$ are straightforward.$\Box$\\

Consider the continuous family of orbits $\{\gamma_y^{1,2}\}_{y}$, $y\in (\varepsilon/(1+\varepsilon),1)$ defining $\dul_{1,2}(y)$ on
fig. \ref{fig_fig21} respectively. Each $\gamma_y^{1,2}$ is a real path starting at $y$ and terminating at $\dul_{1,2}(y)$.
\begin{definition}
We shall say that the analytic functions $\dul_{1,2}$ defined in the domain ${\bf D}_1$ allow a geometric realization, provided that the continuous families of real paths $\{\gamma_y^{1,2}\}_{y}$, $y\in (y_c,1)$ allow an extension to  
continuous families of paths $\{\gamma_y\}_{y\in {\bf D}_1}$, contained in the complex leaves of the foliation 
${\mathcal F}_{\varepsilon,0}$, such that each $\gamma_y^{1,2}$ starts at $y$ and terminates at   
$\dul_{1,2}(y)$. 
\end{definition}
Of course, the families of paths $\{\gamma_y\}_{y}$ are defined up to a homotopy.
We note that although $\dul_1=\dul_2$ they allow non-equivalent geometric realizations.
\begin{proposition}
\label{pr23}
The real Dulac maps $\dul_{1,2} $ allow a geometric realization in the domain ${\bf D}_1$.
\end{proposition}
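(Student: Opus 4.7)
The plan is to exploit the double-cover description of the complex leaves from Proposition~\ref{pr22}, together with the simple connectedness of ${\bf D}_1$, to continuously deform the real arcs $\gamma_y^{1,2}$ as $y$ varies in ${\bf D}_1$. Fix a real base point $y^\ast \in (y_c, 1) \cap \partial {\bf D}_1$; the two paths $\gamma_{y^\ast}^{1}, \gamma_{y^\ast}^{2}$ are the two halves of the real oval $\{H = H(0,y^\ast)\} \cap \bar{\Pi}$, each joining $(0, y^\ast)$ to $(0, \dul_{1,2}(y^\ast))$ by passing $p_c$ on one side. For every $y \in {\bf D}_1$, the complex leaf $L_y := \{H = H(0, y)\}$ is smooth (as already used implicitly in Proposition~\ref{pr21}) and contains both $(0, y)$ and $(0, \dul_{1,2}(y))$ by the very definition of the Dulac map.

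The next step is to argue that the family $\{L_y\}_{y \in {\bf D}_1}$ is a locally trivial topological fibration. By Proposition~\ref{pr22} each $L_y$ is a double cover of $C_\varepsilon$ ramified precisely where $f(y') = f(y)$; as $y$ moves in ${\bf D}_1$, the two distinguished ramification points $y$ and $\dul_{1,2}(y)$ vary holomorphically by Proposition~\ref{pr21}, while the remaining ramification points stay away from any appropriately chosen arc joining them, as can be read from the isocline picture of fig.~\ref{fig_isoclines}. Since ${\bf D}_1$ is simply connected, these local trivializations patch to a continuous family of homeomorphisms $\Phi_y: L_{y^\ast} \to L_y$ with $\Phi_y(0, y^\ast) = (0, y)$ and $\Phi_y(0, \dul_{1,2}(y^\ast)) = (0, \dul_{1,2}(y))$. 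Setting $\gamma_y := \Phi_y(\gamma_{y^\ast}^{1,2})$ then gives the required continuous extension, the two initial choices yielding the two inequivalent geometric realizations.

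The main obstacle is making the local triviality rigorous, since the leaves are non-compact and Ehresmann's theorem does not apply off the shelf. I would sidestep this by constructing the paths explicitly from \eqref{covering}: for each $y \in {\bf D}_1$ choose, continuously in $y$, an arc $\alpha_y \subset C_\varepsilon$ joining the images of $y$ and $\dul_{1,2}(y)$ under $f$—with $\alpha_{y^\ast}$ being the projection of the real arc in $(0,1)$—and lift $\alpha_y$ to either of the two sheets of the double cover $L_y \to C_\varepsilon$. The continuous choice of $\alpha_y$ is possible because $f$ maps ${\bf D}_1$ and ${\bf D}_0$ injectively onto the same simply connected sub-region of $C_\varepsilon$, with $y$ and $\dul_{1,2}(y)$ having identical images; this is precisely the content of Proposition~\ref{pr21} combined with the polar description of the isoclines $C_{\pm 0}$ and $C_{\pm\pi}$.
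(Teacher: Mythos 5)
Your final construction---choosing a continuous family of arcs in the base joining $y$ to $\dul_{1,2}(y)$ that deforms the real segment $[y,\dul_{1,2}(y)]$, and lifting it through the double covering given by formula (\ref{covering})---is exactly the paper's argument, which lifts such arcs $\sigma_y\subset \overline{{\bf D}}_0\cup{\bf D}_1$ through the projection $\pi(x,y)=y$ restricted to the leaf component over $\overline{{\bf D}}_0\cup{\bf D}_1$, where the only ramification points are $y$ and $\dul_{1,2}(y)$, and then identifies the endpoint of the lift as $(0,\dul_{1,2}(y))$ by analytic continuation from the real interval $(y_c,1)$. The local-triviality/Ehresmann discussion is an unnecessary detour that you rightly abandon, and the phrase ``images of $y$ and $\dul_{1,2}(y)$ under $f$'' should read ``the points of $C_\varepsilon$ over $y$ and $\dul_{1,2}(y)$'' (these two points share the same $f$-value by definition of the Dulac map), but the substance of your argument coincides with the paper's.
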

{\bf Proof.}
Consider the projection
$$
\pi : \bbC \rightarrow \bbC : (x,y) \mapsto y
$$
and let $\Gamma_y$ be the connected component of a leaf of the foliation ${\mathcal F}_{\varepsilon,0}$ through the point $(0,y) $ which is contained in the pre-image under $\pi$ of the domain $\overline{{\bf D}}_0 \cup {\bf D}_1$. If $c= y^\varepsilon (1-y)$, then 
along $\Gamma_y$ holds
\begin{equation}
\label{leaf}
x^2= y + c (1-y)^{-\frac{1}{\varepsilon}}
\end{equation}
which shows that $\pi: \Gamma_y \rightarrow \overline{{\bf D}}_0 \cup {\bf D}_1$ is a double covering, ramified at the points  $y$ and $\dul_1(y)=\dul_2(y)$. The leaf $\Gamma_y$ is therefore a smooth open Riemann surface, except $\Gamma_{y_c}$ which has a normal crossing at $(0,y_c)$, where $y_c= \dul_1(y_c)=\dul_2(y_c)$

Let $\sigma_y$ be a path in $\overline{{\bf D}}_0 \cup {\bf D}_1$ connecting $y$ to $\dul_1(y)=\dul_2(y)$. Such a path can be lifted in $\Gamma_y$ under $\pi$ in two different ways. To every continuous family of paths $\sigma_y$ correspond therefore two continuous families of lifts in $\gamma_y \subset \Gamma_y$. We apply now these considerations to the continuous family of segments $\sigma_y=[y,\dul_{1,2}(y)]\subset \bbR$, $y\in (y_c,1)$. The lift of $\sigma_y$ will be the real orbit $\gamma_y^1$ or $\gamma_y^2$ defining $\dul_1(y)$ or $\dul_2(y)$, and shown on fig. \ref{fig_fig21}. 

To complete the proof we
\begin{itemize}
\item 
extend first the segments $\sigma_y=[y,\dul(y)]\subset \bbR$ to a continuous family of paths $\sigma_y \subset  {\bf D}_1\cup {\bf D}_0 $, 
$y\in {\bf D}_1 $ 
which  connect $y\in {\bf D}_1 $ to $\dul (y)\in {\bf D}_0$, as it is illustrated on fig. \ref{fig_dulg}.
\\
\begin{figure}[htpb]
 \def\svgwidth{8cm}
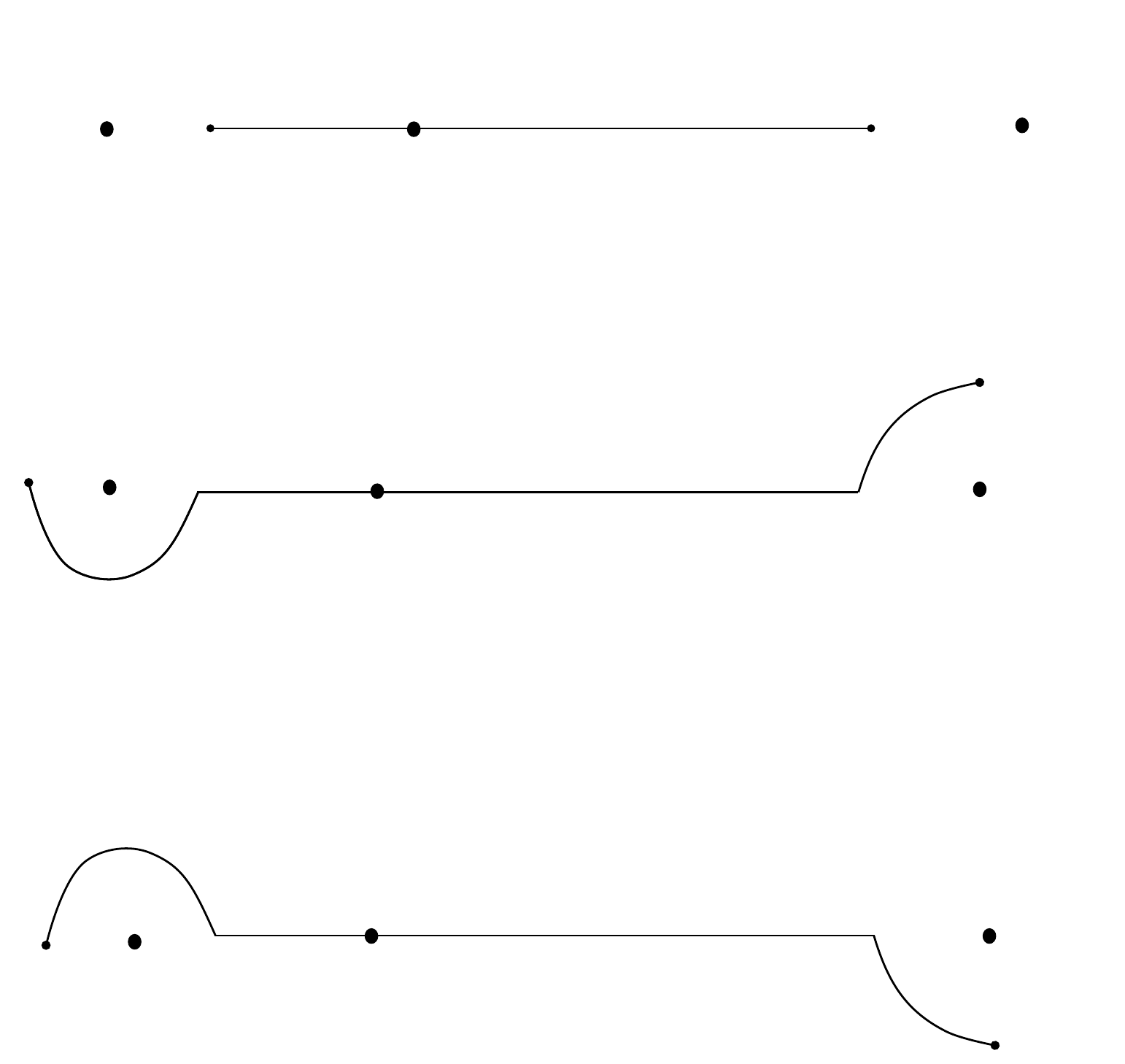
\caption{The continuous family of paths $\{ \sigma_y \}_y $}
\label{fig_dulg}
\end{figure}
\item
Second, to each path $\sigma_y$, $y\in {\bf D}_1 $, we associate its lift $\gamma_y$ with respect to $\pi$ with initial point $(0,y)$. For $y\in (y_c,1)$ the lift $\gamma_y$ was already defined, by continuity it will be defined without ambiguity for all $y\in {\bf D}_1$. For $y\in (y_c,1)$ the end point of $\gamma_y$ is
$(0, \dul(y))$. As the end point of $\gamma_y$ depends analytically on $y$, then it is 
$(0, \dul(y))$ for all $y\in {\bf D}_1$.

\end{itemize}
$\Box$
\begin{remark}
The above considerations show that the Riemann surface $\Gamma_y$ is a topological cylinder, which is a double covering of the topological disc $\overline{{\bf D}}_0 \cup {\bf D}_1$, ramified over $y$ and $\dul(y)$. The closed loop $\gamma_y^1 \circ( \gamma_y^2)^{-1}$ is the generator of the fundamental group of the cylinder.
\end{remark}
\begin{figure}[htpb]
\input{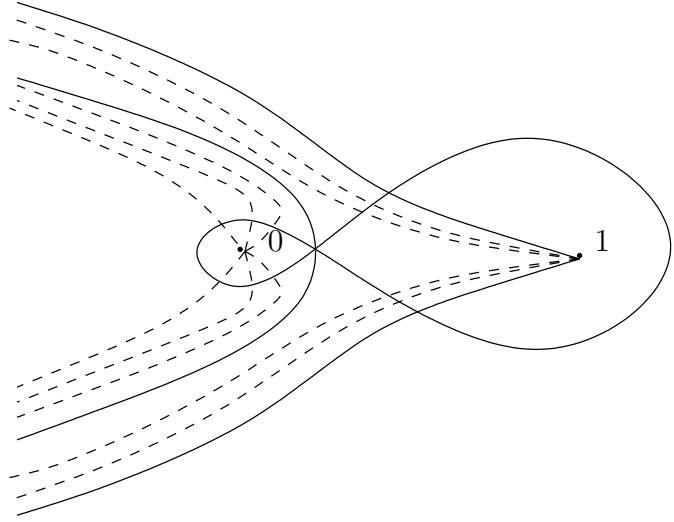}
\caption{The curve $\{y: |f(y)|=|f(y_c)| \} .$}
\label{fig_csingc}
\end{figure}
\begin{remark}
It can be shown, by making use of Proposition \ref{pr22} that 
$\dul_{1,2}$ allows also an analytic extension to ${\mathbb C} \setminus \{1\}$ with countably many algebraic singularities. More precisely, if $y_0\neq 1$ is a singular point, then $y_0$ belongs to the leaf through the center point $p_c$. The curve on the $y$-plane containing the possible singular points is defined therefore by the equation 
$$\{y: |f(y)|=|f(y_c)| \} .$$
This curve is easily analyzed and it  is shown on fig.\ref{fig_csingc}.  
\end{remark}

\subsection{Geometric variation}
We describe a geometric construction of the variation. It will be used later, to control intersection points of curves $\{Im \dul_1=0\}$ and $\{Im \dul_2=0\}$. 
\begin{figure}[htpb]
\input{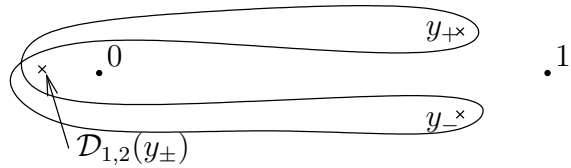}
\caption{Projection of the figure eight loop $\gamma_8$ to the $x=0$ transversal}
\label{fig_y8loop}
\end{figure}

\begin{figure}[htpb]
\input{figs/fig8.pspdftex}
\caption{The figure eight loop $\gamma_8$ at the limit $\ep\to 0$}
\label{fig_fig8}
\end{figure}

In the following proposition the dot denotes product in the path grupoid, i.e. $\gamma^1\cdot\, \gamma^2$ denotes path consisting of $\gamma^1$ followed by $\gamma^2$, where the end point of $\gamma_1$ conincide with the starting point of $\gamma_2$. The analog meaning for the inverse.
\begin{proposition}
\label{geo8}
Let $\gamma^{1,2}_y$ be a geometric realization of Dulac maps $\dul_{1,2}$ and let $y_\pm\in C_{\pm\pi}$ and $y_-=\overline{y_+}$.
The path $\gamma_8=(\gamma^1_{y_+}\cdot (\gamma^2_{y_-})^{-1})\cdot \overline{(\gamma^1_{y_+}\cdot (\gamma^2_{y_-})^{-1})}$ is a closed loop that projects on the $y$-plane to the loop shown on figure \ref{fig_y8loop}. It can be homotopically deformed to a loop located in finite ($\ep$-independent) distance from the slow parabola $y-x^2=0$. At the limit $\ep\to 0$ it is the figure eight loop in the leaf of fast foliation -- see figure \ref{fig_fig8}. 
\end{proposition}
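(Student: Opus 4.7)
The plan is to verify each of the three assertions in turn, using the complex-conjugation symmetry of ${\mathcal F}_{\ep,0}$ and the cylindrical topology of its complex leaves $\Gamma_y$ established in the remark after Proposition~\ref{pr23}.

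To see that $\gamma_8$ is closed, note that by Proposition~\ref{pr21} both $\dul_{1,2}(y_+)$ and $\dul_{1,2}(y_-)$ lie in $(-\infty,0)$, and the reality identity $\dul_{1,2}(\bar y)=\overline{\dul_{1,2}(y)}$ together with $y_-=\overline{y_+}$ forces these two real numbers to coincide. Hence $\gamma^1_{y_+}$ and $(\gamma^2_{y_-})^{-1}$ share the endpoint $(0,\dul_{1,2}(y_+))$ inside the complex leaf through $(0,y_+)$, so their composition is a continuous path from $(0,y_+)$ to $(0,y_-)$. Complex conjugation is an involutive symmetry of ${\mathcal F}_{\ep,0}$ exchanging these two base points, so the conjugated half returns to $(0,y_+)$ and closes the loop. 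Identification of $\pi(\gamma_8)$ with the projection shown in fig.~\ref{fig_y8loop} follows directly from the construction of Proposition~\ref{pr23}: each $\pi(\gamma^j_y)$ is homotopic to the extension path $\sigma_y\subset{\bf D}_1\cup\overline{{\bf D}}_0$ from $y$ to $\dul(y)$, and at $y=y_\pm\in C_{\pm\pi}$ both $\sigma_{y_+}$ and $\sigma_{y_-}^{-1}$ terminate at the common point $\dul(y_\pm)\in(-\infty,0)$. Concatenating these with their complex conjugates traces out the doubly-covered loop with outer vertices $y_\pm$ and self-crossing on the negative real axis.

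For the homotopic deformation and the $\ep\to 0$ limit I would argue as follows. By the remark after Proposition~\ref{pr23}, the restricted leaf $\Gamma_{y_+}$ is a topological cylinder and the class of $\gamma^1_{y_+}\cdot(\gamma^2_{y_-})^{-1}$ generates its fundamental group. The polar equations defining $C_{\pm\pi}$ show these curves admit a well-defined $\ep\to 0$ limit bounded away from $y=1$, so $y_\pm$ and $\dul(y_\pm)$ can be arranged to lie in an $\ep$-uniform compact subset $K\subset({\bf D}_1\cup\overline{{\bf D}}_0)\setminus\{1\}$. Using the leaf equation \eqref{leaf}, $x^2=y+c(1-y)^{-1/\ep}$ with $c=H(0,y_+)$, one chooses a representative whose $y$-projection lies in $K$ and whose $x$-coordinate is uniformly bounded in $\ep$, giving the claimed $\ep$-independent distance from $y=x^2$. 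In the limit $\ep\to 0$ the foliation ${\mathcal F}_{\ep,0}$ degenerates to the fast foliation $\{y=\const\}$; by slow-fast singular perturbation, the slow portions of the chosen representative contract onto the slow manifold while the fast portions survive, producing a loop on a single horizontal leaf $\{y=c_0\}$ that encircles the two branch points $x=\pm\sqrt{c_0}$ over it — the figure-eight of fig.~\ref{fig_fig8}.

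The main technical obstacle is the uniform-in-$\ep$ control of the factor $(1-y)^{-1/\ep}$ appearing in the leaf equation \eqref{leaf}; naively this blows up as $\ep\to 0$ for $|1-y|<1$, so one must exploit the conservation identity $|y^\ep(1-y)|=|\dul(y)^\ep(1-\dul(y))|$ to keep the product $c(1-y)^{-1/\ep}$ of controlled order along the chosen representative and thereby pass to the singular limit.
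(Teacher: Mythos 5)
Your treatment of the first assertion is correct and coincides with the paper's own argument: by Proposition~\ref{pr21} the curves $C_{\pm\pi}$ are mapped by $\dul_{1,2}$ into $(-\infty,0)$, and the reflection identity $\dul_{1,2}(\bar y)=\overline{\dul_{1,2}(y)}$ together with $y_-=\overline{y_+}$ forces $\dul_{1,2}(y_+)=\dul_{1,2}(y_-)$, so the two halves of $\gamma_8$ meet at the third ramification point of the covering \eqref{leaf} and the conjugation symmetry closes the loop; the paper then deforms the path off the ramification points to obtain the projection of fig.~\ref{fig_y8loop}, exactly as you do. One point you (like the paper) pass over silently is that $y_+^{\ep}(1-y_+)$ and $y_-^{\ep}(1-y_-)$ differ by the monodromy factor $e^{2\pi i\ep}$, so the two base points lie on the same leaf only because the connecting path crosses the cut along the negative real axis; this is harmless but deserves a sentence.

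For the second and third assertions your argument is incomplete, and you say so yourself. First, the claim that $C_{\pm\pi}$ admit an $\ep\to 0$ limit ``bounded away from $y=1$'' is false as stated: these curves emanate from $y=1$, collapse onto the real axis at rate $O(\ep)$ for intermediate angles, and stabilize only after the blow-up \eqref{scale} of Section~4. Second, and more seriously, the $\ep$-independent distance to the parabola is governed by $|x^2-y|=\bigl|\,y_+^{\ep}(1-y_+)/(1-y)\bigr|^{1/\ep}$ along the leaf, a quantity that degenerates to $0$ or to $\infty$ exponentially fast as soon as $y$ leaves an $O(\ep)$-neighborhood of the level curve $\{|y^{\ep}(1-y)|=|y_+^{\ep}(1-y_+)|\}$; choosing a representative of the homotopy class along which this quantity stays of order one is precisely the content of the second assertion, and you identify it as ``the main technical obstacle'' without resolving it. To be fair, the paper's own proof is equally silent here: it establishes only the closedness and the projection picture and refers to figures~\ref{fig_y8loop} and~\ref{fig_fig8} for the remaining two claims. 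So your proposal reproduces what the paper actually proves, and honestly flags, rather than fills, the gap in the rest.
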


\begin{proof}
By definition of the curves $C_{\pm\pi}$, $\dul_{1,2}(y_\pm)\in\bbR_-$; due to the conjugation relation $y_-=\overline{y_+}$ and the Schwartz reflection principle, the images of $y_\pm$ coincide. Thus, the path $(\gamma^1_{y_+}\cdot (\gamma^2_{y_-})^{-1})$ joins $Y_+$ with $y_-$ and passes through the third ramification point $\dul_{1,2}(y_\pm)$ of the double covering \ref{leaf}. It can be deformed homotopically to the path avoiding the ramification point $\dul_{1,2}(y_\pm)$ from the left. 

The next path is a complex conjugacy of the first one. It is the same composition of $\gamma$ paths with indices $(1,2)$ exchanged and the direction reversed. The composition is as shown on the figure \ref{fig_y8loop} after homotopical deformation that avoiding the ramification points $y_\pm$. 
\end{proof}

\section{Blowing up the turning point}
\begin{figure}[htpb]
 \def\svgwidth{6cm}
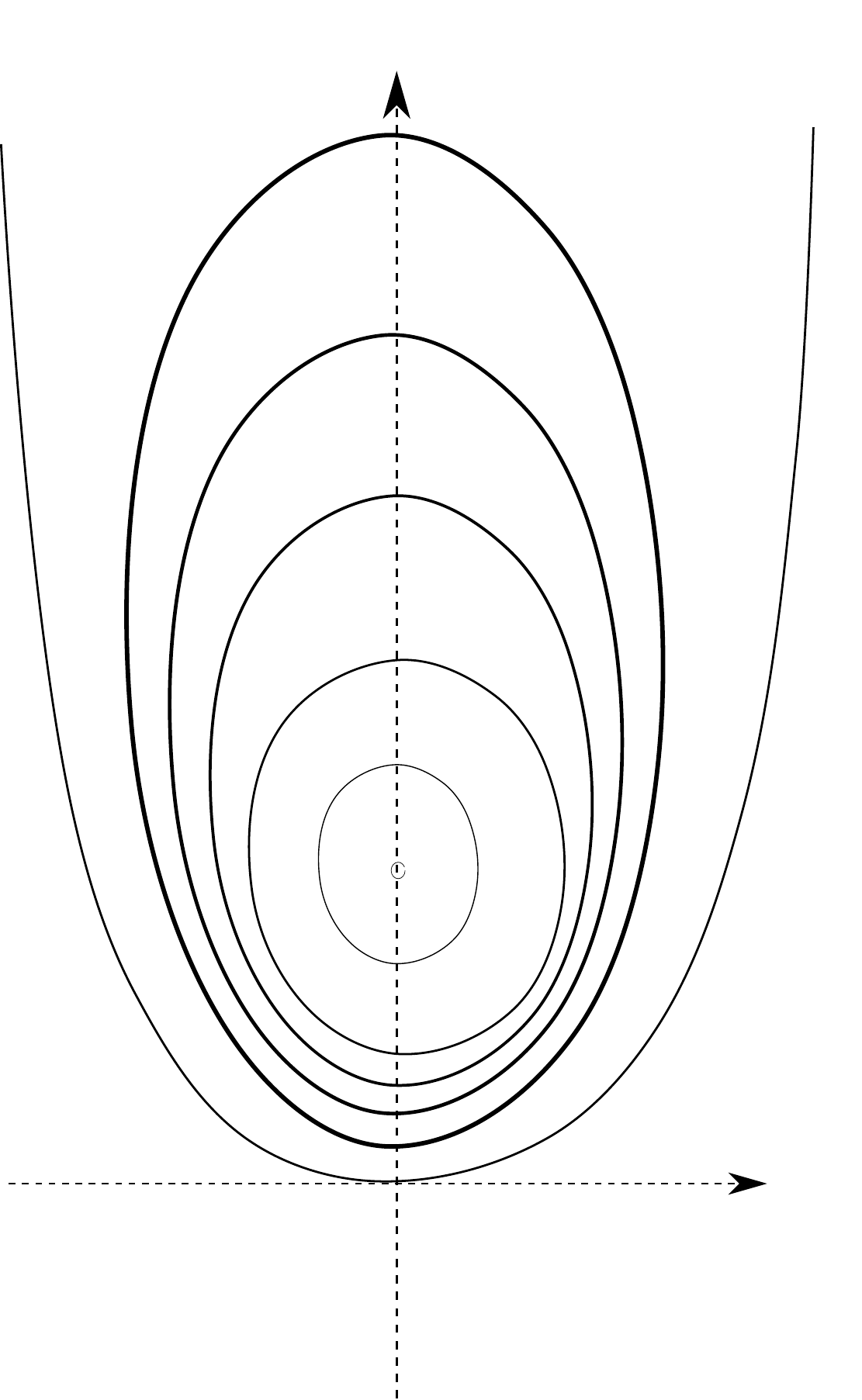
\caption{The real level sets of $ e^{-y } (y-x^2)$ }
\label{fig_scaledportarit}
\end{figure}
When $\varepsilon$ tends to zero, the center $p_c$ of the integrable foliation tends to the contact point $p_0=(0,0)$ of the slow manifold $\{ y=x^2 \}$ with the leaves $y=const$. The point $p_0$ is therefore the turning point of our slow-fast Darboux foliation ${\mathcal F}_{\varepsilon,0}$, and the study of the Dulac map near $p_0$ when $\varepsilon$ tends to zero will be studied, as explained in detail in \cite[section 3]{bmn13}, by a weighted blow up in the $(x,y,\varepsilon)$-space. Namely, the rescaling 
\begin{equation}
\label{scale}
x \rightarrow \sqrt{\varepsilon} x, \; y \rightarrow \varepsilon y, \; \varepsilon  \rightarrow \varepsilon
\end{equation}
sends
the center point $y_c$ to $1/(1+\varepsilon)$, leaves the parabola $y=x^2$ invariant and 
transforms the first integral $[(1-y)(y-x^2)^\varepsilon]^{1/\varepsilon}$ of the foliation 
${\mathcal F}_{\varepsilon,0}$ to
the form
$$
\varepsilon(1-\varepsilon y)^{\frac{1}{\varepsilon} }(y-x^2) = \varepsilon e^{-y + O(\varepsilon)} (y-x^2) = \varepsilon e^{-y } (y-x^2) + O(\varepsilon^2).
$$
The blown up foliation has therefore, in every compact neighborhood of the origin, an analytic first integral, uniformly in $x,y$, $O(\varepsilon)$-close to 
$$
e^{-y } (y-x^2) 
$$
see fig. \ref{fig_scaledportarit}.
Finally, the curves $C_{\pm \pi}$ are transformed to curves on  a finite distance from the origin $y=0$, see fig. \ref{fig_scaledisoclines}.
\begin{figure}[htpb]
 \def\svgwidth{6cm}
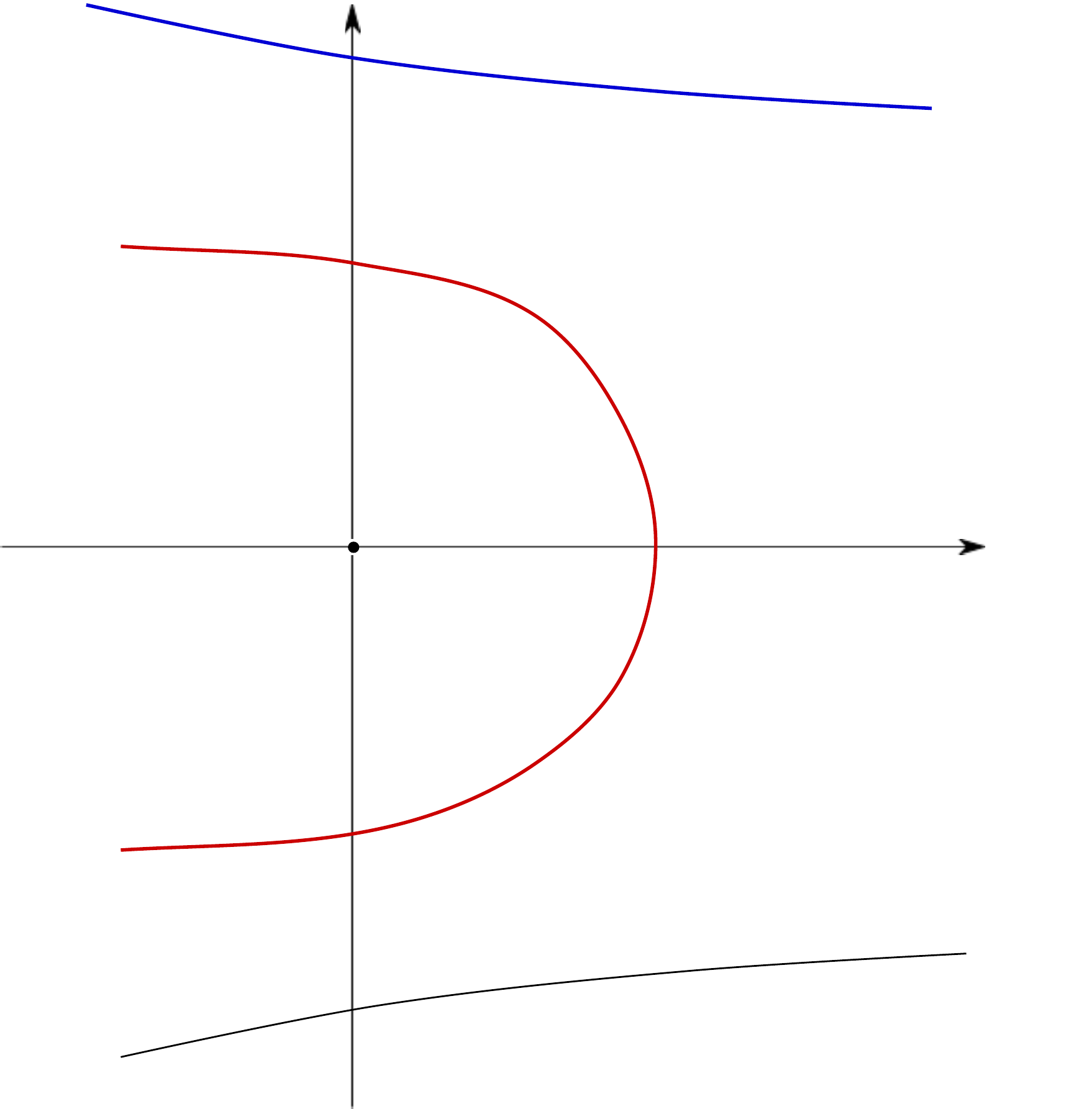
\caption{The cross-section $\{x=0\}$ in a neighborhood of the turning point  after the weighted blow up of the $(x,y,\varepsilon)$-space}
\label{fig_scaledisoclines}
\end{figure}

\section{The perturbed foliation ${\mathcal F}_{\varepsilon,\delta}$ where $\varepsilon >0$ and $\|\delta\|$ is much smaller than $\varepsilon$ }

\begin{figure}[htpb]
\input{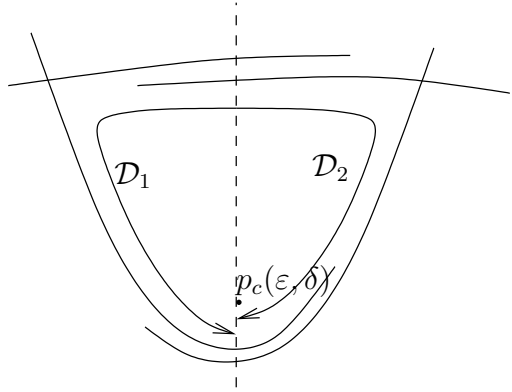}
\caption{Two Dulac maps for the perturbed system}
\label{fig_dul}
\end{figure}

In the preceding section we considered two Dulac maps $\dul_1$ and $\dul_2$ which coincide as analytic functions, but were defined geometrically in two different ways. We have respectively two non-equivalent geometric realizations $\{\gamma_y^{1,2}\}_{y\in {\bf D}_1}$ of the same analytic function $\dul = \dul_1 = \dul_2$.
In this section we suppose that $\varepsilon>0$ is fixed, $\delta\in \bbR^N$, and $\|\delta\|$ is sufficiently small with respect to $\varepsilon$. Let $p_c=(x_c,y_c)$ be the singular point (a focus) of ${\mathcal F}_{\varepsilon,\delta}$, close to the center of ${\mathcal F}_{\varepsilon,0}$. We consider the cross-section $\{ x= x_c \}$ and as before define geometric realizations of the corresponding Dulac maps $\dul_1$ and $\dul_2$. Indeed, when $\delta \neq 0$, the   continuous family of paths $\{\gamma_y^{1,2}\}_{y\in {\bf D}_1}$, contained in the leaves of the foliation 
${\mathcal F}_{\varepsilon,0}$ persist under  a small perturbation, at least when the initial point $(0,y)$ belongs to a relatively compact domain $K\cup {\bf D}_1$, and the leaves of the foliation ${\mathcal F}_{\varepsilon,0}$ are transverse to the line $\{x=0\}$ at $(0,y)$, as well at $(0,\dul(y))$. We obtain in this way two continuous families of paths in the leaves of the perturbed foliation ${\mathcal F}_{\varepsilon,\delta}$, which begin at a point $(0,y)$ and terminate at a point $(0, \dul^{1}(y)$ or $(0, \dul^{2}(y)$ respectively. 

The above holds true also in a neighborhood of the center point $p_c$ (which becomes a focus after the perturbation), but  also in a neighborhood of the singular point $(0,1)\in \{ x=0 \}$. Indeed, after the perturbation a saddle point persists, and the Dulac map near such a point is defined in every fixed sector, centered at the singular point. The image of the real analytic curves $C_{\pm \pi}$ under the Dulac map is the
 negative real semi-axes, see Proposition \ref{pr21}. Therefore these curve can be also defined by the condition that 
 $\{y : Im (\dul^{1}(y)) = 0$, and $Im (\dul^{2}(y)) = 0$. We denote for a further use these two curves by $C_{\pm \pi}^1=C_{\pm \pi}^1 (\varepsilon, \delta) $ and $C_{\pm \pi}^2=C_{\pm \pi}^2 (\varepsilon, \delta) $  respectively   (when there is no ambiguity,  the dependence on $\varepsilon$ and $\delta$ is omitted).

 As shown in \cite{gavr11,gavr11a}, the curves $C_{\pm \pi}^{1,2}$ are analytic, including at the singular points of the Dulac maps, corresponding to the saddles $s_1$ and $s_2$.  
 \begin{figure}[htpb]
 \def\svgwidth{10cm}
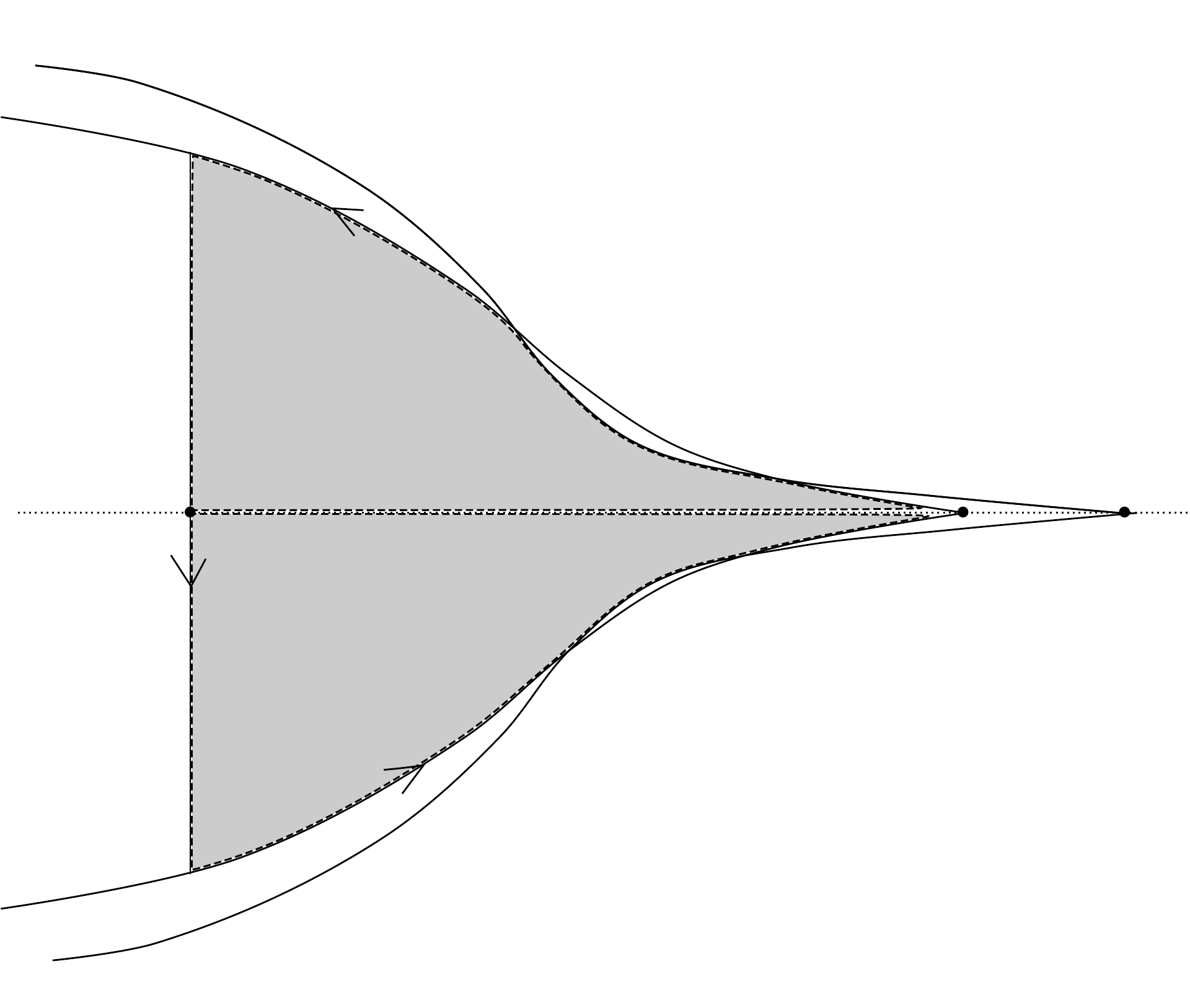
\caption{The complex domain ${\mathbf D}$,  bounded by $C_{\pm \pi}^{1,2}$,  and the line ${\mathcal Re}\; y > y_c$}
\label{fig_domain}
\end{figure}
Consider the closed complex domain ${\mathbf D}$, which is bounded by $C_{\pm \pi}^{1,2}$,  and the line ${\mathcal Re}\; y > y_c$, as on fig.\ref{fig_domain}.

\begin{proposition}
\label{argument}
The number of the  limit cycles of ${\mathcal F}_{\varepsilon,\delta}$, bifurcating from the closed period annulus of ${\mathcal F}_{\varepsilon,0}$, is bounded by the variation of the argument of the analytic function $\dul_1 - \dul_2$ divided by $2\pi$, along the border $ \partial  \mathbf D$ of the domain shown on fig. \ref{fig_domain}.
\end{proposition}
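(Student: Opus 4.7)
The strategy is to apply the argument principle to the holomorphic displacement function $F(y) := \dul_1(y) - \dul_2(y)$ on the complex domain $\mathbf{D}$. The proof divides naturally into three stages: (i) identifying real zeros of $F$ with limit cycles, (ii) verifying that $F$ is holomorphic in a neighborhood of $\overline{\mathbf{D}}$, and (iii) invoking the argument principle.

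For (i), the two geometric realizations $\{\gamma_y^1\}$ and $\{\gamma_y^2\}$ of the unperturbed Dulac map constructed in Proposition \ref{pr23} persist under the small perturbation $\delta$: the transversality of the unperturbed leaves to $\{x=x_c\}$ at the endpoints, together with relative compactness of the lift domain, allows one to continue each path family into the leaves of $\mathcal{F}_{\varepsilon,\delta}$, producing two genuinely distinct analytic ``half--Dulac'' maps $\dul_1$ and $\dul_2$. A closed orbit of $\mathcal{F}_{\varepsilon,\delta}$ through a real point $(x_c,y)$, $y\in (y_c,1)$, then corresponds to a composition of such half-orbits that closes up, which happens precisely when $\dul_1(y) = \dul_2(y)$. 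Hence the number of real limit cycles bifurcating from $\bar\Pi$, counted with multiplicity, is bounded by the number of zeros of $F$ in $\mathbf{D}$.

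For (ii), holomorphy of $\dul_{1,2}$ on $\mathbf{D}$ and analytic extension through the boundary curves $C_{\pm\pi}^{1,2}$ were established in Proposition \ref{pr21} in the unperturbed case and carry over by persistence of the $\gamma_y^{1,2}$; analyticity through the saddle endpoints $s_1,s_2$, the only delicate points of $\partial \mathbf{D}$, is precisely the content of the extension results of \cite{gavr11,gavr11a}, which apply the Petrov trick in a fixed sector around each saddle. Consequently $F$ extends holomorphically to an open neighborhood of $\overline{\mathbf{D}}$.

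Once (i) and (ii) are in place, step (iii) is the standard argument principle: assuming, as we may after an arbitrarily small deformation of $\partial\mathbf{D}$, that $F$ has no zeros on $\partial \mathbf{D}$, one has
\[
\#\bigl\{ y \in \mathbf{D} : F(y)=0 \bigr\} \;=\; \frac{1}{2\pi}\, \mathrm{Var}_{\partial \mathbf{D}} \arg F,
\]
with zeros counted with multiplicity, and combining with (i) yields the proposition. The main obstacle is (ii), specifically the analytic continuation of $\dul_{1,2}$ across the saddles $s_1,s_2$: near these points the Dulac maps display genuinely transcendental behavior, and it is the fact that $\dul_1$ and $\dul_2$ each remain single-valued and holomorphic on the appropriate side of the corresponding saddle sector (so that $F$ itself is single-valued on $\overline{\mathbf{D}}$) that makes the application of the argument principle legitimate and the variation of argument a finite, well-defined quantity.
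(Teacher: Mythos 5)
Your overall strategy coincides with the paper's: limit cycles are identified with fixed points of the return map, i.e.\ with solutions of $\dul_1(y)=\dul_2(y)$; the geometric realizations of Proposition \ref{pr23} persist under small $\delta$, giving analytic continuations of $\dul_{1,2}$ into $\mathbf D$; and the bound follows from the argument principle applied to $\dul_1-\dul_2$, exactly as in \cite[section 2.1]{gavr11a}. Steps (i) and (iii) of your proposal are the paper's proof, merely written out in more detail.

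Step (ii), however, contains a misstatement. You claim that $F=\dul_1-\dul_2$ ``extends holomorphically to an open neighborhood of $\overline{\mathbf D}$'' and that analyticity \emph{through} the saddle endpoints $s_1,s_2$ is the content of \cite{gavr11,gavr11a}. This is not what those references provide, and it is explicitly contradicted by the paper, which states that the Dulac map ``is singular at $s_i$'' but ``continuous at these points'', and that the corollary following the proposition only asserts analyticity on a neighborhood of ${\mathbf D}\setminus\{s_i\}$. What \cite{gavr11,gavr11a} show to be analytic at the saddles are the curves $C^{1,2}_{\pm\pi}=\{\imm\,\dul_{1,2}=0\}$, not the maps themselves; near a hyperbolic saddle the Dulac map has transcendental (non-analytic) behavior. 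This does not sink the proposition: analyticity in the interior together with continuity up to $\partial\mathbf D$ and $F\neq 0$ on $\partial\mathbf D$ suffices for the winding number of $F$ along $\partial\mathbf D$ to be well defined and to dominate the number of interior zeros. But the distinction is not cosmetic — the entire Petrov-trick apparatus in the proof of Theorem \ref{th:main} exists precisely because one cannot control the variation of the argument near $s_1,s_2$ by invoking analyticity there; if your stronger claim were true, much of the subsequent work would be unnecessary. You should weaken (ii) to: $F$ is analytic in $\mathbf D$ and on $\partial\mathbf D\setminus\{s_1,s_2\}$, and continuous at $s_1,s_2$.
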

{\bf Proof.} The Dulac maps have   analytic continuations in $\mathbf D$, as their geometric realizations persist under sufficiently small perturbation. The limit cycles are in a one-to-one correspondence with the fixed points $y$, $\dul_1(y)=\dul_2(y)$ of the first return map. Therefore it is enough to bound the zeros of $\dul_1-\dul_2$ by making use of the argument principle, as explained  in \cite[section 2.1]{gavr11a}.$\Box$

\begin{corollary} 
For every fixed $\varepsilon>0$ and for every sufficiently small $\delta$, the Dulac maps $\dul_1, \dul_2$ are real analytic functions in a suitable neighborhood of ${\mathbf D} \setminus \{s_i\}$ where $s_i\sim 1$ are the singular points of the Dulac maps. 
\end{corollary}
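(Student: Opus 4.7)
The plan is to combine three inputs: (i) the explicit analytic extension of $\dul$ in the unperturbed case $\delta = 0$, provided by Propositions \ref{pr21} and \ref{pr23}; (ii) a perturbative transport of the geometric realization of $\dul_{1,2}$ from $\mathcal{F}_{\varepsilon,0}$ to $\mathcal{F}_{\varepsilon,\delta}$; and (iii) the local analyticity of the Dulac map at the hyperbolic saddles $s_1, s_2$ supplied by \cite{gavr11, gavr11a}.

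First I would record what is already known when $\delta=0$. By Proposition \ref{pr21}, the analytic function $\dul$ is bi-holomorphic on ${\bf D}_0 \cup {\bf D}_1 \cup C_{+0}\cup C_{-0}\cup \{y_c\}$, and admits an analytic continuation to a neighborhood of $C_{\pm\pi}$ with $\dul(C_{\pm\pi})\subset(-\infty,0)$. Proposition \ref{pr23} realizes $\dul$ geometrically via families of paths $\gamma^{1,2}_y$ lying in the complex leaves of $\mathcal{F}_{\varepsilon,0}$. In particular, for $\delta=0$ the map $\dul_1=\dul_2$ is already real-analytic on an open neighborhood of $\overline{\mathbf{D}}\setminus\{1\}$.

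Next, I would transport these realizations to $\mathcal{F}_{\varepsilon,\delta}$ on each compact subset $K\subset \overline{\mathbf{D}}\setminus\{1\}$. For fixed $\varepsilon$, the paths $\gamma^{1,2}_y$ with $y\in K$ stay in a fixed compact subset of leaves disjoint from the singular locus of $\mathcal{F}_{\varepsilon,0}$, and the leaves depend analytically on $(y,\delta)$ there. Hence, for $\|\delta\|$ small (depending on $\varepsilon$ and $K$), each path $\gamma^{1,2}_y$ deforms to a path in a leaf of $\mathcal{F}_{\varepsilon,\delta}$ whose endpoint on the cross-section $\{x=x_c\}$ depends analytically on $(y,\delta)$. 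This yields real-analytic extensions of $\dul_{1,2}$ on a neighborhood of $K$, which near $y_c$ extend across the focus $p_c(\varepsilon,\delta)$ by the usual focal normalization. Note that once $\delta\neq 0$ the two functions $\dul_1,\dul_2$ are genuinely distinct, and the two boundary curves $C^1_{\pm\pi}$, $C^2_{\pm\pi}$ separate.

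The main obstacle is the behavior at the endpoints $s_1\sim s_2\sim 1$, where the transit time diverges and the naive perturbation argument breaks down on a neighborhood of $\overline{\mathbf{D}}$. Here I would invoke the construction of \cite{gavr11,gavr11a}: the saddles $s_1,s_2$ of $\mathcal{F}_{\varepsilon,\delta}$ persist and are hyperbolic, so the associated corner Dulac maps admit analytic extensions to full complex sectors of the cross-section at $s_i$, and the defining curves $C_{\pm\pi}^{1,2}$ extend to analytic arcs which land transversally at $s_i$. The final step is to glue these local Gavrilov extensions with the global geometric realization from the previous paragraph; they coincide on their overlap by analytic continuation, and together they cover an open neighborhood of $\overline{\mathbf{D}}\setminus\{s_1,s_2\}$, which is the conclusion.
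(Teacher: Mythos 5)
Your proposal is correct and follows essentially the same route as the paper, which derives the corollary from the persistence of the geometric realizations $\{\gamma_y^{1,2}\}$ on relatively compact subsets away from the saddle $(0,1)$, together with the sectorial analyticity of the Dulac map at the persisting hyperbolic saddles $s_1,s_2$ taken from \cite{gavr11,gavr11a}. The paper states the corollary without a separate proof, as a consequence of exactly this discussion, so no further comment is needed.
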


Our intension is to apply the argument principle to the analytic function $\dul_1- \dul_2$ in ${\mathbf D}$, in order to bound its complex zeros (equivalently, fixed points of the return map, or complex limit cycles). For this we note that although the Dulac map is singular at $s_i$, it is continuous at these points. We shall bound uniformly one one side, the variation of the argument of $\dul_1- \dul_2$ along the segment $\{{\mathcal Re}\; y > y_c\} \cap {\mathbf D}$, and on the other hand the number of the zeros of the imaginary part of $\dul_1- \dul_2$ along $\{C_1^\pm \cup C_2^\pm\} \cap  {\mathbf D}$. By definition of the curves $C_1^\pm , C_2^\pm$, these zeros are just the intersection points $C_1^+ \cap C_2^+$+ and $C_1^- \cap C_2^-$, counted with multiplicity. The curves $C_1^\pm,  C_2^\pm$ depend, however, on $\varepsilon$, $\delta$, and their behavior when the parameters $\varepsilon, \delta$ tend to zero is crucial. When $\delta=0$, $\varepsilon>0$ the curves are explicit and tend to the real axes as $\varepsilon \rightarrow 0$. 

\section{Proof of Theorem \ref{th:main}}
 \begin{figure}[htpb]
 \def\svgwidth{12cm}
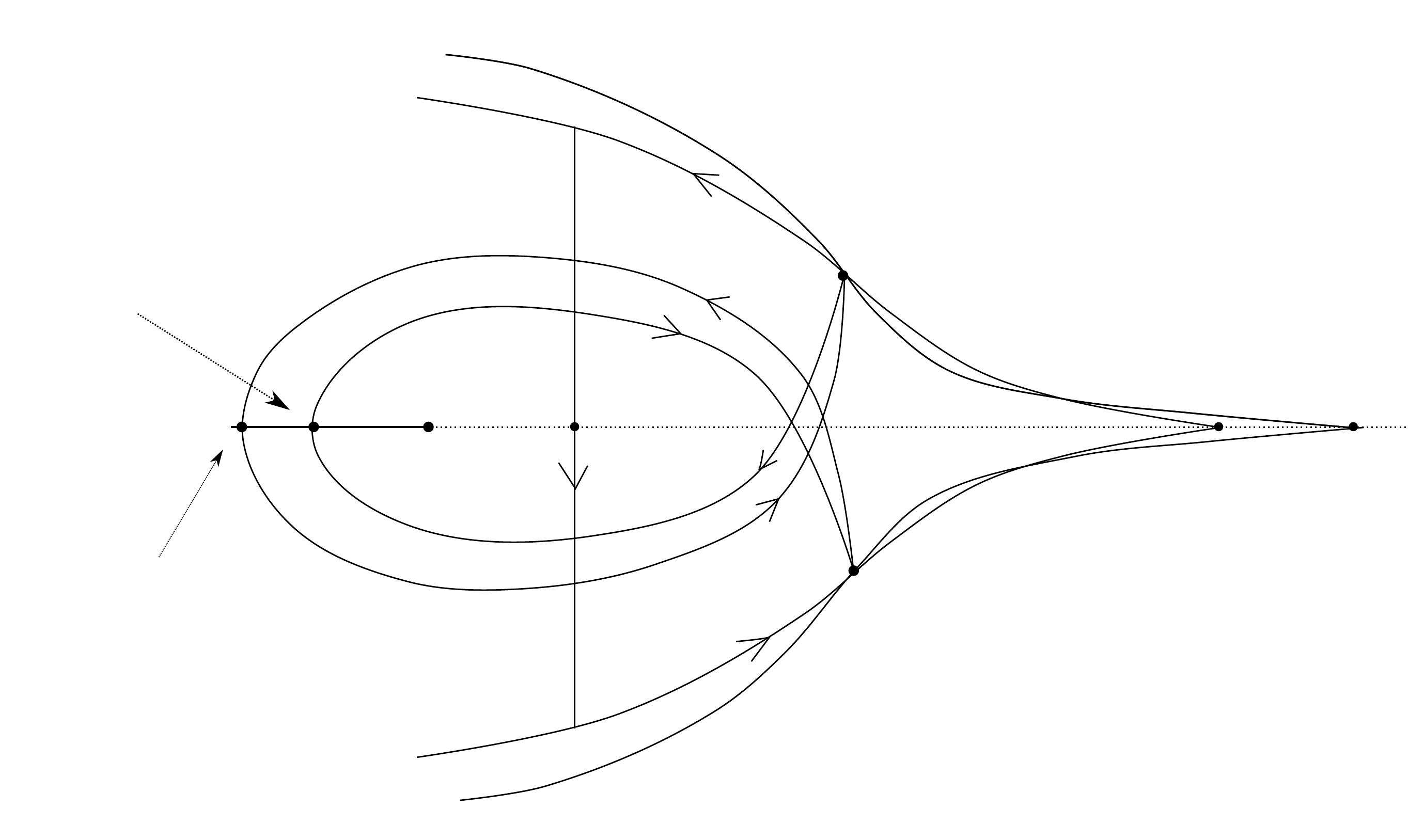
\caption{The geometric realization of the holonomy ${\mathcal Hol}$ and its projection on the cross-section.}
\label{fig_domain1}
\end{figure}
Suppose first that $\varepsilon>0$ belongs to a sufficiently small but fixed neighborhood of the origin.
Consider the segment corresponding to the part of $ \partial  \mathbf D$, contained in the line $\{ {\mathcal Re}\; y = y_c \}$. It follows from Proposition \ref{pr23} that along this segment the
functions $\dul_1$, $ \dul_2$ have a geometric realization and hence 
are analytic, both in $y$ and $\varepsilon$, $\delta$, provided that $\varepsilon \neq 0$ and $\|\delta\|$ is much smaller than $\varepsilon$. To prove the analyticity in a neighborhood  of $\varepsilon = 0$ we consider the rescaling (\ref{scale}) and the domain   ${\bf D}_1$ shown on fig.\ref{fig_scaledisoclines}. It can be shown along the same lines as in Proposition \ref{pr23}, that the Dulac maps have a geometric realization in ${\mathbf D}_1$ again, and hence it is also analytic 
in $\sqrt{\varepsilon}, \delta$, in 
 a neighborhood of $\varepsilon=0, \delta=0$. 
This implies the analyticity for $\varepsilon$, $\delta$ close to zero (after the rescaling).
Therefore the variation of the argument along the compact segment, corresponding to the part of $ \partial  \mathbf D$, contained in the line $\{ {\mathcal Re}\; y = y_c \}$, is uniformly bounded.

Along the remaining part of $ \partial  \mathbf D$ the displacement map $\dul_1- \dul_2$ is analytic, except at the singular points $s_i$. For this purpose we bound the increase of the argument of $\dul_1- \dul_2$ by the number of the zeros of its imaginary part (the so called "Petrov trick" ) along $ \partial  \mathbf D$. Clearly, these zeros are exactly the intersection points of the curves $\{Im \; \dul_1 = 0 \}$, $\{Im \; \dul_2 = 0 \}$, that is to say $C^1_{-\pi} \cap C^2_{-\pi}$ and  $C^1_{+\pi}\cap C^2_{+\pi}$. The intersection points have a transparent geometric meaning: they correspond to complex limit cycles intersecting the cross-section, or fixed points of the holonomy map ${\mathcal Hol}$ along the "figure height loop" which we recall now (see \cite{gavr11a,gavr11, bmn13}). If the holonomy map ${\mathcal Hol}$ were analytic with respect to  the parameters too, this would imply  an uniform bound for the number of fixed points of $Z(\varepsilon)$, 
 when $\varepsilon>0$ belongs to a sufficiently small neighborhood of the origin.

Indeed, the one-dimensional leaves of the foliation ${\mathcal F}_{0,0}$ are the punctured discs $ \{ (x,y): y=c\} \setminus \{ ( \pm \sqrt{c},c) \}$, where $c\neq 0$. The geometric realization of ${\mathcal Hol}=id$ is then explained in proposition \ref{geo8} and shown on figures \ref{fig_y8loop} ,\ref{fig_fig8} from which the analyticity follows, except along the leaf $ \{  y=0 \}$ through the turning point $(0,0)$. In a neighborhood of the turning point   we use the rescaling (\ref{scale}) and describe the geometric realization of ${\mathcal Hol}$ as follows. 
Let $y\in C_{-\pi}$ on fig. \ref{fig_domain1} and note that $\bar{y} \in C_{+\pi}$. Consider the paths in the leaves of the foliation ${\mathcal F}_{0,0}$
$$
\gamma_y^1,( \gamma_{\bar{y}}^1)^{-1}, \gamma_y^2, (\gamma_{\bar{y}}^2)^{-1} .
$$
connecting the points $y, \dul_1(y), \bar{y}, \dul_2(y), y$.
These paths can be composed and the resulting closed path defines the holonomy map ${\mathcal Hol}$ which is then obviously analytic in $\varepsilon, \delta$. The projection of these four paths on the cross-section, in the case of a fixed point of ${\mathcal Hol}$ are shown on  fig. \ref{fig_domain1}.

In a completely similar way we may prove the uniform boundedness of $Z(\varepsilon)$ for $\varepsilon>0$ in a neighborhood of $\varepsilon = \infty$. Suffice it to exchange the roles of $P_0$ and $P_1$ , and consider the foliation with a first integral $P_0 P_1 ^{1/\varepsilon}$, where $1/\varepsilon>0$ belongs  to a sufficiently small neighborhood of the origin.

Finally, when $\varepsilon$ belongs to a compact subset of $(0,\infty)$, the uniform boundedness of $Z(\varepsilon)$ follows from \cite{gavr11a}. The Theorem is proved.

\bibliographystyle{plain}
\bibliography{../BIBfiles/bibliography}
\end{document}